\newtheorem{theorem}{Theorem}[section]
\newtheorem{lemma}[theorem]{Lemma}
\newcommand{\norm}[1]{\lVert #1\rVert}
\icmltitlerunning{Stochastic Variance-Reduced Heavy Ball Power Iteration}
\begin{document}

\twocolumn[
\icmltitle{Stochastic Variance-Reduced Heavy Ball Power Iteration}




\begin{icmlauthorlist}
\icmlauthor{Cheolmin Kim}{nu}
\icmlauthor{Diego Klabjan}{nu}
\end{icmlauthorlist}

\icmlaffiliation{nu}{Department of Industrial Engineering and Management Sciences, Northwestern University, Evanston, Illinois, USA}

\icmlcorrespondingauthor{Cheolmin Kim}{cheolminkim2019@u.northwestern.edu}

\icmlkeywords{Stochastic, PCA, Heavy Ball, Acceleration, Variance-reduced, Power iteration}

\vskip 0.3in
]



\printAffiliationsAndNotice{}  

\begin{abstract}
We present a stochastic variance-reduced heavy ball power iteration algorithm for solving PCA and provide a convergence analysis for it.
The algorithm is an extension of heavy ball power iteration, incorporating a step size so that progress can be controlled depending on the magnitude of the variance of stochastic gradients.
The algorithm works with any size of the mini-batch, and if the step size is appropriately chosen, it attains global linear convergence to the first eigenvector of the covariance matrix in expectation.
The global linear convergence result in expectation is analogous to those of stochastic variance-reduced gradient methods for convex optimization but due to non-convexity of PCA, it has never been shown for previous stochastic variants of power iteration since it requires very different techniques.
We provide the first such analysis and stress that our framework can be used to establish convergence of the previous stochastic algorithms for any initial vector and in expectation.
Experimental results show that the algorithm attains acceleration in a large batch regime, outperforming benchmark algorithms especially when the eigen-gap is small.
\end{abstract}

\section{Introduction}
Principal component analysis (PCA) is a fundamental tool for dimensionality reduction in machine learning and statistics. Given a data matrix $A=[a_1 a_2 \ldots a_n] \in \mathbb{R}^{d \times n}$ consisting of $n$ data vectors $a_1,a_2,\ldots,a_n$ in $\mathbb{R}^d$, PCA finds a direction $w$ onto which the projections of the data vectors have the largest variance. 
Assuming that the data vectors are standardized with a mean of zero and standard deviation of one, the PCA problem can be formulated as
\begin{equation} \label{prob:pca}
\begin{aligned} 
& \textrm{maximize} \quad f(w) = \frac1{2n}\sum_{i=1}^n (a_i^T w)^2 \\
& \textrm{subject to} \quad \norm{w}_2 = 1.
\end{aligned}
\end{equation}

Letting $C=\frac{1}{n} AA^T \in \mathbb{R}^{d \times d}$ be the covariance matrix, we can write the objective function as $f(w) = \frac1{2}w^TCw$. As the largest eigenvector $u_1$ of $C$ maximizes $f(w)$, one can solve \eqref{prob:pca} by computing the singular value decomposition (SVD) of $A$. However, the runtime of SVD is $\mathcal{O}(\text{min}\{nd^2,n^2d\})$, which can be prohibitive in a large-scale setting. 

An alternative way to solve \eqref{prob:pca} is to use power iteration \citep{golub2012matrix} which repeatedly applies the following update step at each iteration
\begin{align}
w_{t+1} = C w_{t}, \quad w_{t+1} = \frac{w_{t+1}}{\|w_{t+1}\|_2}. \label{eq:Power}
\end{align}
Since the gradient $\nabla f(w_t)$ is equal to $Cw_t$, the above update rule can be interpreted as obtaining the next iterate $w_{t+1}$ by normalizing the gradient of the current iterate $w_t$.
A sequence of iterates $\{w_t\}$ generated by power iteration \eqref{eq:Power} is guaranteed to achieve an $\epsilon$-accurate solution after $\mathcal{O}\big(\frac{1}{\Delta} \text{log} \frac{1}{\epsilon}\big)$ iterations, exhibiting linear convergence where $\Delta = \lambda_1 - \lambda_2$ is the eigen-gap and $\lambda_1 > \lambda_2 \geq \ldots \geq \lambda_d \geq 0$ are the eigenvalues of $C$.
As each iteration involves multiplying a vector $w_t$ with the matrix $C$, the total runtime becomes $\mathcal{O}\big(nd\frac{1}{\Delta} \text{log} \frac{1}{\epsilon}\big)$. 
If $n$ and $d$ are both large, the runtime of power iteration is better than that of SVD. 
Nonetheless, it still largely depends on $n$ and can be prohibitive when $\Delta$ is small. In order to reduce the dependence on $\Delta$ or $n$, the following variants of power iteration have been developed.

To reduce the dependence on $\Delta$, \cite{xu2018accelerated} proposed power iteration with momentum (Power+M), which is a simple variant of power iteration utilizing the momentum idea of \citep{polyak1964some}. With the additional momentum term, it can be written as
\begin{align}
w_{t+1} = 2 Cw_t - \beta w_{t-1}.
\label{eq:Power-momentum}
\end{align}
In heavy ball power iteration \eqref{eq:Power-momentum}, we have
\begin{align*}
(u_k^Tw_t)^2 \leq \beta^{t} (u_k^Tw_0)^2    
\end{align*}
if $\lambda_k \leq \beta$ and  
\begin{align*}
(u_k^Tw_t)^2 = \Theta \Big( \Big( \lambda_k + \sqrt{\lambda_k^2 - \beta} \Big)^{2t} \Big) (u_k^Tw_0)^2
\end{align*}
otherwise. 
Therefore, if $\beta = \lambda_2^2$, it achieves the optimal rate of convergence resulting in the runtime of $\mathcal{O}\big(nd\frac{1}{\sqrt{\Delta}} \text{log} \frac{1}{\epsilon}\big)$, which greatly improves the dependence on $\Delta$.


\begin{table*}[ht]
\caption{Comparison of stochastic variance-reduced methods for PCA and their convergence analyses. Algorithms are compared for two regimes (small-batch and large-batch), and types of convergence guarantee and conditions for the angle between an initial iterate $\tilde{w}_0$ and the first loading vector $u_1$ are summarized. \enquote{Local} means that there is a restriction on the angle while \enquote{global} implies no such restriction.}
\label{table:summary}
\vskip 0.15in
\begin{center}
\begin{small}
\begin{sc}
\begin{tabular}{l|cc|cc|c}
\toprule
Algorithm & 
\begin{tabular}[c]{@{}c@{}}\ Convergence  \\ (small-batch) \end{tabular}  & \begin{tabular}[c]{@{}c@{}}\ Acceleration  \\ (large-batch) \end{tabular} & 
\multicolumn{2}{c|}{\begin{tabular}[c]{@{}c@{}} Convergence Guarantee \end{tabular}} & Reference \\
\midrule
VR-PCA    	& $\surd$ 	& $\times$	& Probabilistic & Local & \cite{shamir2015stochastic} \\
VR Power+M  & $\times$	& $\surd$	& Probabilistic & Local & \cite{xu2018accelerated} \\
\textbf{VR HB Power} & $\surd$ 	& $\surd$	& Expectation & Global & \textbf{(This Paper)}       \\
\bottomrule
\end{tabular}
\end{sc}
\end{small}
\end{center}
\vskip -0.1in
\end{table*}

On the other hand, a stochastic algorithm utilizing a stochastic gradient $a_{i_t} a_{i_t}^T w_t$ rather than a full gradient $Cw_t$ is introduced in \citep{oja1982simplified}.
Since it requires just one data vector at a time, the computational cost per iteration is significantly reduced. However, due to the variance of stochastic gradients, a sequence of diminishing step sizes needs to be adopted in order to converge, making its progress slow near an optimum.
Built on a recent stochastic variance-reduced gradient (SVRG) technique \citep{johnson2013accelerating}, \cite{shamir2015stochastic} proposed a stochastic variance-reduced version of Oja's algorithm (VR-PCA).
By utilizing stochastic variance-reduced gradients with a constant step size, this sophisticated stochastic algorithm attains linear convergence, reducing the total runtime required to obtain an $\epsilon$-accurate solution to $\mathcal{O}(d(n+\frac{1}{\Delta^2}) \text{log} \frac{1}{\epsilon})$. 

Other works on the power method include the noisy \cite{hardt2014noisy}, coordinate-wise \cite{lei2016coordinate}, and shifted-and-inverted \cite{garber2015fast,garber2016faster} power methods. The noisy power method considers the power method in noise setting and \cite{balcan2016improved} extended it, providing an improved gap-dependency analysis. The shifted-and-inverted power method reduces the PCA problem to solving a series of convex least squares problems, which can be solved by optimization algorithms such as coordinate-descent \cite{wang2018efficient}, SVRG \cite{garber2015fast,garber2016faster}, accelerated gradient descent or accelerated SVRG \cite{allen2016lazysvd}, and Riemannian gradient descent \cite{xu2018gradient}. Due to the presence of fast least squares solvers, the shifted-and-inverted approach has received much attention. However, since it involves solving a series of optimization problems, it is not simple to implement and hard to parallelize while the variance-reduced power methods are easy to implement and a single iteration can be parallelized in the obvious way.

In this paper, we present a stochastic variance-reduced algorithm (VR HB Power) for heavy ball power iteration \eqref{eq:Power-momentum}. While a stochastic variance-reduced power iteration with momentum (VR Power+M) is introduced in \cite{xu2018accelerated}, it is not practical to use since it requires that the size of the mini-batch needs to be sufficiently large. In this work, we enhance the algorithm by adding a step size which turns out to have a big impact. By incorporating the step size, the proposed algorithm can work with any size of the mini-batch. Given that the step size is appropriately chosen depending on the size of the mini-batch, the algorithm attains linear convergence to the first loading vector as VR-PCA. Furthermore, if the size of the mini-batch is chosen to be large, it attains accelerated convergence, outperforming VR-PCA. Table \ref{table:summary} summarizes the state-of-the-art.

For the algorithm, we provide a novel convergence analysis where the resulting convergence statement provides a bound for the ratio of two expectations that goes to zero at a linear rate. This result is analogous to those of stochastic variance-reduced gradient methods for convex optimization and  stronger than probabilistic statements, appearing in \cite{shamir2015stochastic} and \cite{xu2018accelerated} in a sense that a probability parameter $\delta$ does not constraint the size of the mini-batch or the rate of convergence. Note that PCA studied herein is a non-convex problem and thus completely different techniques are needed. In order to obtain a convergence guarantee with high probability, the step size needs to be arbitrarily small in \cite{shamir2015stochastic} and the batch size needs to be arbitrarily large in \cite{xu2018accelerated}. Complementary to them, our analysis does not have such requirements and the convergence is deterministically guaranteed in terms of expectation terms.

Moreover, our analysis allows random initialization while VR-PCA and VR Power+M do not. Since random initialization of $\tilde{w}_0$ results in $|u_1^T \tilde{w}_0| \leq \mathcal{O}(1/\sqrt{d})$ with high probability \cite{shamir2015stochastic}, it is not trivial to obtain an initial iterate $\tilde{w}_0$ such that $|u_1^T \tilde{w}_0| \geq 1/2$, especially when $d$ is large. To handle this issue, an initialization scheme that samples a point from the standard Gaussian distribution in $\mathbb{R}^d$ and performs a single power iteration is presented in \cite{shamir2016fast}. After some stochastic iterations, this process yields an iterate $\tilde{w}_0$ satisfying $|u_1^T \tilde{w}_0| \geq 1/2$ with high probability. However, such an initialization scheme is not essentially necessary since VR-PCA practically works well with random initialization, as observed in \cite{shamir2015stochastic}. Our convergence analysis resolves this gap, showing that the rate of convergence does not depend on how far an iterate is from $u_1$ but is kept the same across iterations, as in the case of deterministic power iteration. The framework used in the convergence analysis is not specific to the presented algorithm; it can be extended to analyze other stochastic variance-reduced PCA algorithms such as VR-PCA or VR Power+M, deriving in expectation bounds for them and resolving their initialization issues.

Our work has the following contributions.
\begin{enumerate}
\item[1.] We present a stochastic variance-reduced algorithm for heavy ball power iteration, which works with any size of the mini-batch and attains acceleration in a large-batch regime. Since there is no constraint on the size of the mini-batch, it is more practical than VR Power+M, and it outperforms VR-PCA in a large batch-setting, especially when the eigen-gap $\Delta$ is small.

\item[2.] We provide a novel convergence analysis for the algorithm. The convergence result does not require a good initialization, yet provides a bound for the ratio of two expectation terms, which has a similar form to those of stochastic variance-reduced gradient methods for convex optimization. The framework for the convergence analysis is general, therefore can be used to analyze other stochastic variance-reduced PCA algorithms. To this end, we are the first to establish convergence of VR-PCA and VR Power+M for any initial vector and in expectation.

\item[3.] We report numerical experiments on diverse datasets to investigate the empirical performance of the algorithm. Experimental results show that our algorithm is more efficient than VR-PCA in a large-batch setting, especially when the eigen-gap is small and it outperforms VR-Power+M in all cases.
\end{enumerate}

The paper is organized as follows. We present the algorithm in Section~\ref{sec:algorithm} and the convergence analysis is provided in Section~\ref{sec:convergence-analysis}. Some practical considerations regarding the implementation of the algorithm are discussed in Section~\ref{sec:practical-considerations} and the experimental results are followed in Section~\ref{sec:numerical-experiments}.

\section{Algorithm}
\label{sec:algorithm}
In this section, we develop a stochastic variance-reduced algorithm for heavy ball power iteration \eqref{eq:Power-momentum}. For eigenvalues and eigenvectors of $C$, we assume that the eigenvalues $\lambda_1,
\lambda_2, \ldots, \lambda_d$ satisfy 
\begin{align*}
\lambda_1 > \lambda_2 \geq \ldots \geq \lambda_d \geq 0    
\end{align*}
and the eigenvectors $u_1, u_2, \ldots, u_d$ form an orthonormal basis. Since a symmetric matrix is orthogonally diagonalizable, we can assume such eigenvectors exist without loss of generality.

Variance reduction algorithms periodically compute the exact gradient which is then used in the inner stochastic gradient type updates. This exact gradient reduces the variance of this inner loop. In summary, a variance reduction algorithm has an outer loop and an inner loop.

Let $\tilde{w}_s$ and $w_t$ denote an outer-loop and inner-loop iterate, respectively. To get a stochastic variance-reduced gradient of an inner loop iterate $w_t$, we first decompose it into two parts as
\begin{align*}
w_t = \frac{(\tilde{w}_{s}^T w_{t})}{\|\tilde{w}_s\|^2} \tilde{w}_{s} + \bigg(I- \frac{\tilde{w}_{s} \tilde{w}_{s}^T}{\|\tilde{w}_s\|^2} \bigg) w_{t}
\end{align*}
using the outer loop iterate $\tilde{w}_{s}$.
In the above decomposition, the former term represents the projection of $w_{t}$ on $\tilde{w}_{s}$ while the latter term represents the remaining vector.
Utilizing the exact gradient $\tilde{g}$ at $\tilde{w}_{s}$, the exact gradient at the first term can be computed as 
\begin{align*}
\nabla f \bigg( \frac{(\tilde{w}_{s}^T w_{t})}{\|\tilde{w}_s\|^2} \tilde{w}_{s} \bigg) = \frac{(\tilde{w}_{s}^T w_{t})}{\|\tilde{w}_s\|^2} C\tilde{w}_{s} = \frac{(\tilde{w}_{s}^T w_{t})}{\|\tilde{w}_s\|^2} \tilde{g}.
\end{align*}
On the other hand, a stochastic sample $S$ is used to compute a stochastic gradient at the second term as
\begin{align*}
\frac{1}{|S|}\sum \limits_{l \in S} a_{l}a_{l}^T \bigg(I- \frac{\tilde{w}_{s} \tilde{w}_{s}^T}{\|\tilde{w}_s\|^2} \bigg) w_{t}
\end{align*}
resulting in the following stochastic variance-reduced gradient $g_t$ at $w_{t}$ as
\begin{align}
g_t = \frac{(\tilde{w}_{s}^T w_{t})}{\|\tilde{w}_s\|^2} \tilde{w}_{s} + \frac{1}{|S|}\sum \limits_{l \in S} a_{l}a_{l}^T \bigg(I- \frac{\tilde{w}_{s} \tilde{w}_{s}^T}{\|\tilde{w}_s\|^2} \bigg) w_{t}.
\label{eq:VR-gradient}
\end{align}
With the use of the stochastic variance-reduced gradient $g_t$, we obtain a stochastic variance-reduced heavy ball power iteration as 
\begin{align}
w_{t+1} \leftarrow 2 \big( (1-\eta)w_{t} + \eta g_t \big) - \beta w_{t-1}
\label{eq:VR-power-momentum}
\end{align}
where $\eta \in (0,1]$ is the step size and $\beta$ is the momentum parameter. 
Note that the deterministic update formula \eqref{eq:Power-momentum} can be obtained from \eqref{eq:VR-power-momentum} when the step size $\eta$ is set to 1 and the exact gradient $g_t = Cw_t$ is used.

As pointed out in \cite{JMLR:v18:16-410}, it is important to keep $g_t$ close to the exact gradient $\nabla f(w_t)$ to obtain stochastic acceleration. An obvious way to achieve this goal is to set the mini-batch size $|S|$ large. If the size of the mini-batch is large, the variance of the stochastic part in \eqref{eq:VR-gradient} becomes small, so that more accurate $g_t$ can be obtained. Another way to control the variance of $g_t$ is to decrease the step size $\eta$. 
If the step size $\eta$ is small, the angle between the outer iterate $\tilde{w}_{s}$ and the inner iterate $w_t$ can be kept close to $0$. If the outer iterate $\tilde{w}_{s}$ is closely aligned with the inner iterate $w_t$, the first term dominates the second term in \eqref{eq:VR-gradient} making $g_t$ close to the true gradient $\nabla f(w_t)$.

The mechanism of controlling the progress of the algorithm using the step size $\eta$ is not present in Power+M. As a result, it fails to converge unless the mini-batch size $|S|$ is sufficiently large. 
To the contrary, our algorithm works with any size of the mini-batch due to the presence of the step size $\eta$. If $|S|$ is small, since the variance of the stochastic part is large, a small $\eta$ needs to be chosen so that progress is made near the outer iterate $\tilde{w}_s$, making the inner iterate $w_t$ closely aligned with the outer iterate $\tilde{w}_s$. On the other hand, if $|S|$ is large, $g_t$ has a small variance even when it is far from the outer iterate $\tilde{w}_s$. Therefore, we can select a large $\eta$ to make rapid progress.

Summarizing all the above, we obtain VR HB Power exhibited in Algorithm~\ref{alg:VRPowerHB}.
\begin{algorithm}[h]
   \caption{VR HB Power}
   \label{alg:VRPowerHB}
\begin{algorithmic}
   \STATE {\bfseries Parameters:} step size $\eta$, momentum $\beta$, mini-batch size $|S|$, epoch length $m$ \\
   \STATE {\bfseries Input:} data vectors $a_i, i = 1,\ldots,n$ \\
   \STATE Randomly initialize an outer iterate $\tilde{w}_0$ \\
   \FOR{$s=0,1,\ldots$}
      \STATE $\tilde{g} \leftarrow \frac{1}{n}\sum_{l=1}^n a_l a_l^T {\tilde{w}_{s}}$ \\
      \STATE $w_0 \leftarrow \tilde{w}_{s}$
      \STATE $w_1 \leftarrow (1-\eta)w_0 + \eta\tilde{g}$ \\
      \FOR{$t = 1,2,\ldots,m-1$} 
	  \STATE Sample a mini-batch sample $S_{t}$ uniformly at random
      \STATE $g_t \leftarrow \frac{1}{|S_t|}\sum \limits_{l \in S_t} a_{l}a_{l}^T \Big( w_{t}- \frac{(w_t^T w_0) w_0}{\|w_0\|^2} \Big) + \frac{(w_t^T w_0)}{\|w_0\|^2} \tilde{g}$ \\
      \STATE $w_{t+1} \leftarrow 2 \big( (1-\eta)w_{t} + \eta g_t \big) - \beta w_{t-1}$ 
      \ENDFOR
      \STATE $\tilde{w}_s \leftarrow w_m$
   \ENDFOR
\end{algorithmic}
\end{algorithm}

\section{Convergence Analysis}
\label{sec:convergence-analysis}
In this section, we provide a convergence analysis for VR HB Power. Before presenting the convergence analysis, we first introduce some notations.
\begin{table*}[ht]
\caption{Comparison of the convergence rates of stochastic variance-reduced methods for PCA. The rates of convergence are obtained by analyzing the convergence of each algorithm using the framework presented in Section \ref{sec:convergence-analysis}. 
For each algorithm, $g$ and $h$ compose the convergence rate $\rho$ such that $\rho = g + h$.}
\label{table:summary-convergence-rate}
\vskip 0.15in
\begin{center}
\begin{small}
\begin{sc}
\begin{tabular}{l|c|c|r}
\toprule
Algorithm & Parameters & $g(\cdot)$ & \multicolumn{1}{c}{$h(\cdot)$} \\
\midrule
VR-PCA    & $\eta, K$  & 
\( \displaystyle 
\Bigg[
\frac{1 +\eta \lambda_2}{1 +\eta \lambda_1} \Bigg]^{2m}
\)
& $\eta^4 P_{1,m}(K)$ \\
VR Power+M  & $K$ & 
\(
\displaystyle 
\Bigg[
\frac{2 \lambda_2^m}
{\sum_{j=0}^1 ( \lambda_1 + (-1)^j \sqrt{\lambda_1 + \lambda_2} \sqrt{\Delta} )^{m}}
\Bigg]^2 \)
& $P_{1,m}(K)$ \\
VR HB Power & $\eta, K$ & \(
\displaystyle 
\Bigg[ \frac{2(1-\eta+\eta \lambda_2)^m}{ \sum_{j=0}^1 {\big( 1-\eta+\eta\lambda_1+(-1)^j \sqrt{2-2\eta+\eta(\lambda_1+\lambda_2)} \sqrt{\eta \Delta} \big)^m}} \Bigg]^2
\)
& $\eta^4 P_{1,m}(K)$ \\
\bottomrule
\end{tabular}
\end{sc}
\end{small}
\end{center}
\vskip -0.1in
\end{table*}

We define the sample covariance matrix $C_t$ at inner iteration $t$ and the projection matrix $P$ to the space orthogonal to the outer iterate $w_0=\tilde{w}_s$ as
\begin{align}
C_t = \frac{1}{|S_t|} \sum_{i_t \in S_t} a_{i_t} a_{i_t}^T, \quad P = I - \frac{w_0 w_0^T}{\|w_0\|^2}. \label{eq:def-P}
\end{align}
Using \eqref{eq:def-P}, we can write $g_t$ as
\begin{align*}
g_t = \eta Cw_t + \eta (C_t - C) Pw_t.
\end{align*}
Since $S_t$ is sampled uniformly at random, we have $E[C_t]=C$. Taking the expectation on the dot product of $u_k$ and \eqref{eq:VR-power-momentum}, we obtain
\begin{align*}
E[u_k^Tw_{t+1}] = 2(1-\eta+\eta\lambda_k) E[u_k^Tw_{t}] - \beta E[u_k^Tw_{t-1}].
\end{align*}
Since an optimal solution of the PCA problem \eqref{prob:pca} is the first eigenvector $u_1$ of the covariance matrix $C$, the optimality gap is measured as $\sum_{k=2}^d (u_k^Tw_t)^2 / (u_1^Tw_t)^2$, representing how closely $w_t$ is aligned with $u_1$ \cite{golub2012matrix}. Note that if $w_t = u_1$, this ratio is $0$. Our analysis studies it in expectation by providing a bound for $\sum_{k=2}^d E[(u_k^Tw_t)^2] / E[(u_1^Tw_t)^2]$.

Note that $1-\eta+\eta\lambda_k \geq 0$ holds for every $k$ since $\eta \in (0,1]$ and $\lambda_k \geq 0$. In the above, $2(1-\eta+\eta\lambda_k)$ corresponds to $2\lambda_k$ in the dot product of $u_k$ and \eqref{eq:Power-momentum}.
Taking the square of it, we define
\begin{align*}
\alpha_k(\eta) = 4(1-\eta+\eta \lambda_k)^2
\end{align*}
for $1 \leq k \leq d$. Analogous to the optimal momentum parameter $\beta = \lambda_2^2$ in \eqref{eq:Power-momentum}, we also consider
\begin{align*}
\beta(\eta) = {(1-\eta+\eta \lambda_2)^2}.
\end{align*}
To characterize the variance of $C_t-C$, we define constant $K$ as
\begin{align*}
K = \|E[(C_t-C)^2]\|.
\end{align*}
Furthermore, let $P_{i,j}(K)$ denote a polynomial having the form of
\begin{align*}
P_{i,j}(K) = \sum_{l=i}^j c_l K^l
\end{align*}
and let $p_t(\alpha,\beta)$ and $q_t(\alpha,\beta)$ be recurrence polynomials satisfying
\begin{align*}
p_t(\alpha,\beta) &= (\alpha - \beta) p_{t-1} (\alpha,\beta) - \beta (\alpha-\beta) p_{t-2} (\alpha,\beta) \\
& \quad + \beta^3 p_{t-3} (\alpha,\beta) \\
q_t(\alpha,\beta) &= (\alpha - \beta) q_{t-1} (\alpha,\beta) - \beta (\alpha-\beta) q_{t-2} (\alpha,\beta) \\
& \quad + \beta^3 q_{t-3} (\alpha,\beta)
\end{align*}
for $t \geq 3$ with
\begin{align*}
p_0(\alpha,\beta) = 1, \quad p_1(\alpha,\beta) = \frac{\alpha}{4}, \quad p_2(\alpha,\beta) =\Big( \frac{\alpha}{2}-\beta \Big)^2, \\
q_0(\alpha,\beta) = 1, \quad q_1(\alpha,\beta) = {\alpha}, \quad q_2(\alpha,\beta) = ( {\alpha}-\beta)^2.
\end{align*}

In the following Lemmas~\ref{lemma:uk-wt-square-exp-equality},~\ref{lemma:P-wt-square-inequality}, and~\ref{lemma:single-epoch-convergence}, we consider a single epoch, which corresponds to one inner loop iteration.
\begin{lemma}
\label{lemma:uk-wt-square-exp-equality}
For any $0 < \eta \leq 1$, we have
\begin{align*}
E[& (u_k^T w_t)^2] = p_{t}(\alpha_k(\eta),\beta(\eta)) E[(u_k^Tw_0)^2]  \\
& + 4\eta^2 \sum_{r=1}^{t-1} q_{t-r-1}(\alpha_k(\eta),\beta(\eta)) E[w_{r}^TPM_kPw_{r}]
\end{align*}
for $1 \leq k \leq d$.
\end{lemma}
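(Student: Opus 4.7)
The plan is to reduce the vector recursion to a scalar one in $y_t := u_k^T w_t$, then expand second moments conditionally, and finally identify the resulting linear recurrence with the one defining $p_t$ and $q_t$.

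First I would rewrite the stochastic gradient as $g_t = C w_t + (C_t - C) P w_t$. This follows because $\tilde g = C w_0$ and $w_t = P w_t + \tfrac{w_0 w_0^T}{\|w_0\|^2} w_t$, so the first, projected term in the algorithm's definition of $g_t$ combines with the outer-iterate contribution to give exactly $C w_t$. Projecting the update \eqref{eq:VR-power-momentum} onto $u_k$ and letting $\gamma_k := 1-\eta+\eta\lambda_k$, the update collapses to the scalar stochastic recurrence
\begin{equation*}
y_{t+1} = 2\gamma_k\, y_t - \beta\, y_{t-1} + 2\eta\, u_k^T (C_t - C) P w_t.
\end{equation*}
Squaring this equation and taking conditional expectation given the $\sigma$-algebra $\mathcal{F}_t$ generated by $S_1,\dots,S_{t-1}$ kills the cross term because $E[C_t - C \mid \mathcal{F}_t] = 0$, while the quadratic noise term contributes $4\eta^2\, w_t^T P M_k P w_t$, where $M_k := E[(C_t-C) u_k u_k^T (C_t-C)]$. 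Taking a further expectation yields the first of the moment recurrences.

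Next I would derive a companion recurrence for $B_t := E[y_t y_{t-1}]$ by multiplying the scalar recurrence at step $t+1$ by $y_t$ and taking expectation; the noise vanishes for the same reason, leaving
\begin{equation*}
B_{t+1} = 2\gamma_k A_t - \beta B_t, \qquad A_{t+1} = \alpha_k(\eta) A_t - 4\gamma_k \beta B_t + \beta^2 A_{t-1} + 4\eta^2 v_t,
\end{equation*}
with $A_t := E[y_t^2]$ and $v_t := E[w_t^T P M_k P w_t]$. Treating $(A_t, B_t, A_{t-1})$ as a three-dimensional linear system with forcing $4\eta^2 v_t$, I would compute the characteristic polynomial of the $3 \times 3$ transition matrix and check that it equals $\lambda^3 - (\alpha_k - \beta)\lambda^2 + \beta(\alpha_k - \beta)\lambda - \beta^3$, which is exactly the characteristic polynomial of the defining recurrence for both $p_t$ and $q_t$.

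By variation of parameters, the solution must split as $A_t = P_t \cdot A_0 + 4\eta^2 \sum_{r=1}^{t-1} Q_{t-r-1}\, v_r$, where both $P_t$ and $Q_t$ satisfy the shared three-term recurrence but with different initial data. To finish, I would pin down the two initial triples using the deterministic facts $A_0 = (u_k^T w_0)^2$, $y_1 = \gamma_k y_0$ (since $w_1$ is deterministic given $w_0$), and a direct expansion of $A_2$: these give $P_0 = 1$, $P_1 = \gamma_k^2 = \alpha_k/4$, $P_2 = (2\gamma_k^2 - \beta)^2 = (\alpha_k/2 - \beta)^2$, matching $p_0, p_1, p_2$; and by tracking the coefficient of $v_1$ in $A_2, A_3, A_4$ one obtains $Q_0 = 1$, $Q_1 = \alpha_k$, $Q_2 = (\alpha_k - \beta)^2$, matching $q_0, q_1, q_2$. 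Since both sides of the claimed identity satisfy the same recurrence with the same three initial conditions, they agree for all $t$ by induction.

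The main obstacle is the decoupling step combined with correctly disentangling the homogeneous response from the Green's function. The asymmetric initial data for $p_t$ (with $p_1 = \alpha_k/4$ but $q_1 = \alpha_k$, and similarly for $p_2$ versus $q_2$) arises from the fact that the first inner step is noiseless, so one must be careful not to conflate the two pieces of the Duhamel decomposition when matching base cases. Once the three-term recurrence is isolated and the six base values are verified by hand, the induction is routine.
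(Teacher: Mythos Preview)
Your proposal is correct and follows essentially the same route as the paper. The paper also projects onto $u_k$, squares, kills the noise cross terms using $E[C_t]=C$, derives the companion recurrence for $B_t=E[(u_k^Tw_t)(u_k^Tw_{t-1})]$, and then eliminates $B_t$ to obtain the scalar three-term recurrence $A_{t+1}=(\alpha_k-\beta)A_t-\beta(\alpha_k-\beta)A_{t-1}+\beta^3A_{t-2}+4\eta^2v_t+4\eta^2\beta v_{t-1}$ before matching initial data; your $3\times3$ companion matrix plus Cayley--Hamilton and Duhamel is just a cleaner packaging of that same elimination, and your verification of $P_0,P_1,P_2$ and $Q_0,Q_1,Q_2$ coincides with the paper's base cases.
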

In Lemma~\ref{lemma:uk-wt-square-exp-equality}, $E[(u_k^T w_t)^2]$ is decomposed into two parts. The first term originates from $E[(u_k^Tw_0)^2]$ while the second sum consisting of $E[w_{t}^TPM_kPw_{t}]$ stems from the stochastic variance of $g_t - \nabla f(w_t)$. Since the stochastic variance needs to be appropriately controlled to obtain convergence, we analyze it in the following lemma.
\begin{lemma}
\label{lemma:P-wt-square-inequality}
For any $0< \eta \leq 1$, we have
\begin{align*}
E[w_{t}^TPM_kPw_{t}] \leq \eta^2 {{P}_{1,t}(K)} \sum_{k=2}^d E \big[ (u_k^Tw_0)^2 \big]
\end{align*}
where $1 \leq k \leq d$.
\end{lemma}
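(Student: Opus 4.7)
The plan is to exploit that $Pw_0 = 0$, so the projection $Pw_t$ of the inner iterate onto the complement of $w_0$ is driven entirely by the stochastic noise injected during the inner loop (plus one deterministic impulse at $t=1$ through $\eta P\tilde g = \eta PCw_0$). Expanding $E[w_t^T P M_k P w_t]$ will then produce a sum whose terms each contribute a power of $K$, and I expect the total to form a polynomial $P_{1,t}(K)$ of degree at most $t$, with the overall $\eta^2$ coming from the noise prefactor in the update.

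First, I would apply $P$ on the left of the update rule $w_{t+1} = 2((1-\eta)w_t + \eta g_t) - \beta w_{t-1}$ and substitute $g_t = Cw_t + (C_t - C)Pw_t$. This yields the second-order recurrence
\begin{equation*}
Pw_{t+1} = 2(1-\eta)Pw_t + 2\eta PCw_t - \beta Pw_{t-1} + 2\eta P(C_t - C)Pw_t,
\end{equation*}
with initial data $Pw_0 = 0$ and $Pw_1 = \eta PCw_0$. I would iterate this recurrence, writing $Pw_t$ as the sum of (i) a deterministic propagation of $Pw_1$ under the operator $PCP$ combined with the momentum term, and (ii) noise contributions of the form $2\eta\,\Phi_{t,\tau}\,P(C_\tau - C)Pw_\tau$ for $\tau = 1,\ldots,t-1$, where $\Phi_{t,\tau}$ is a deterministic matrix built from the recurrence coefficients and closely related to $p_\cdot$ and $q_\cdot$ defined earlier.

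Next, I would plug this expansion into $E[w_t^T P M_k P w_t]$ and take the expectation over all mini-batches. Because the mini-batches are drawn independently and $E[C_t - C] = 0$, every cross term between the deterministic part and a single noise, and every cross term between distinct-time noises, vanishes. Only the squared deterministic part and the per-time squared noise terms survive. For each squared noise term I would invoke the key estimate
\begin{equation*}
\bigl\|E\bigl[(C_t - C) N (C_t - C)\bigr]\bigr\| \le K \|N\|,
\end{equation*}
valid for any symmetric $N$, which reduces the contribution at time $\tau$ to $\eta^2 K$ times a propagated expression $E[w_\tau^T P N' P w_\tau]$ with the same structural form as the left-hand side but at time $\tau < t$. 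Proceeding by strong induction on $t$, each such recursive invocation raises the $K$-degree by one, so after at most $t$ unrollings the estimate collapses to a polynomial $P_{1,t}(K)$ multiplying the boundary quantity $\eta^2 E[\|PCw_0\|_{M_k}^2]$.

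Finally, the boundary is evaluated using the eigenbasis expansion $w_0 = \sum_k (u_k^T w_0) u_k$; since $Pw_0 = 0$, the $u_1$-contribution to $PCw_0$ is cancelled at leading order, leaving a quantity controlled by $\sum_{k=2}^d (u_k^T w_0)^2$ with spectral constants that can be absorbed into the polynomial coefficients, which gives the claimed $\sum_{k=2}^d E[(u_k^T w_0)^2]$ on the right. The main obstacle will be bookkeeping the double accumulation: squaring a sum of $t-1$ noise injections whose squared norms themselves depend on the same quantity at earlier times generates a cascading combinatorial structure, which is further complicated by the momentum term making the underlying recurrence second-order. I would tame this either through the strong induction sketched above or by writing the closed-form solution of the second-order recurrence using $p_\cdot$ and $q_\cdot$ and extracting the degree in $K$ from counting noise factors, confirming that the coefficients remain bounded independently of $K$.
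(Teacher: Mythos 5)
Your high-level plan --- exploit $Pw_0 = 0$, derive a recurrence for $Pw_t$, kill the cross terms using independence and $E[C_t - C] = 0$, and count powers of $K$ as noise accumulates --- is the same skeleton as the paper. However, there are two concrete gaps where your sketch would stall.

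First, $P$ and $C$ do not commute, so the term $2\eta PCw_t$ in your recurrence is \emph{not} of the form $(\text{operator})\,Pw_t$. Writing $w_t = Pw_t + (I-P)w_t$ gives
\begin{align*}
PCw_t = PCPw_t + \frac{w_0^T w_t}{\|w_0\|^2}\, PCw_0,
\end{align*}
and the second piece is a \emph{fresh deterministic forcing at every step $t$}, not just an impulse at $t=1$. Your narrative (``deterministic propagation of $Pw_1$ under $PCP$'') discards this. In the paper this forcing is the entire source of the $\sum_{k\geq2} E[(u_k^T w_0)^2]$ factor: the proof writes $P((1-\eta)I+\eta C)w_{t-1} = ((1-\eta)I+\eta C)Pw_{t-1} + \eta(PC - CP)w_{t-1}$ and then bounds the commutator using the identity $\|PC-CP\|^2 = w_0^TC^2w_0 - (w_0^TCw_0)^2 \leq 2\lambda_1^2\sum_{k\geq2}(u_k^Tw_0)^2$ (Lemmas~\ref{lemma:linear-algebra} and~\ref{lemma:quadratic-form}). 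Without this observation your boundary term never appears at times $t>1$ and the claimed bound does not follow.

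Second, once the forcing term $(PC-CP)w_{t-1}$ is isolated, its contribution scales with $\|w_{t-1}\|^2$, i.e.\ the \emph{full} iterate norm, not $\|Pw_{t-1}\|^2$. Your unrolling only ever produces $Pw_\tau$ for earlier $\tau$, so it has no handle on this. The paper controls it by a separate companion-matrix argument bounding $\|w_l\|^2 \leq P_{0,l}(K)\|w_0\|^2$, which feeds another $K$-polynomial into the final count; this is a nontrivial ingredient you would need to reinvent. A smaller, cosmetic difference: rather than directly expanding $E[w_t^T P M_k P w_t]$, the paper first applies the trace inequality $w^TPMPw \leq \|M\|\,\|Pw\|^2$ (Lemma~\ref{lemma:trace}) to reduce to the scalar quantity $E[\|Pw_t\|^2]$ and only then unrolls --- this is cleaner than carrying the matrix $M_k$ through the recursion and sidesteps having to repeatedly invoke the estimate $\|E[(C_t-C)N(C_t-C)]\| \le K\|N\|$ with a changing $N$. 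Your estimate is correct, but the paper's first move makes it unnecessary to apply it more than once.
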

Lemma~\ref{lemma:P-wt-square-inequality} provides an upper bound for $E[w_{t}^TPM_kPw_{t}]$ which is a function of $\eta$, $K$, and $\sum_{k=2}^d E[ (u_k^Tw_0)^2]$. The upper bound can be interpreted in the following way. If the step size $\eta$ or $\sum_{k=2}^d E[ (u_k^Tw_0)^2]$ is small, iterate updates are made near $w_0$, and therefore $w_t$ is closely aligned with $w_0$. Since this makes $g_t$ close to the exact gradient $\nabla f(w_t)$, the stochastic variance becomes small. On the other hand, when $K$ is small (or $|S|$ is large), the stochastic variance of $C_t - C$ becomes small, resulting in more accurate $g_t$.

The next lemma establishes the error bound in expectation within a single epoch.
\begin{lemma}
\label{lemma:single-epoch-convergence}
For any $0< \eta \leq 1$, we have
\begin{align*}
\frac{\sum_{k=2}^d E[(u_k^Tw_m)^2]}{E[(u_1^Tw_m)^2]} 
\leq \rho(\eta,K) \frac{\sum_{k=2}^d E[(u_k^Tw_0)^2]}{E[(u_1^Tw_0)^2]}
\end{align*}
where
\begin{align*}
\rho(\eta, K) = g(\eta) + h(\eta,K)
\end{align*}
and
\begin{align*}
g(\eta) = \frac{p_m(\alpha_2(\eta),\beta(\eta))}{p_m(\alpha_1(\eta),\beta(\eta))}, \quad h(\eta,K) = \eta^4 {{P}_{1,m}(K)}.
\end{align*}
Function $g(\eta)$ is a decreasing function of $\eta$ on $(0,1]$ and it equals to
\begin{align*}
g&(\eta) = \Bigg[ \frac{(1-\eta+\eta\lambda_1+\sqrt{2-2\eta+\eta(\lambda_1+\lambda_2)} \sqrt{\eta \Delta})^m}{2(1-\eta+\eta\lambda)^m} \nonumber \\
& - \frac{(1-\eta+\eta\lambda_1 - \sqrt{2-2\eta+\eta(\lambda_1+\lambda_2)} \sqrt{\eta \Delta})^m}{2(1-\eta+\eta\lambda)^m} \Bigg]^{-2}.
\end{align*}
Moreover, there exists some $0 < \bar{\eta}(K)$ such that for every $\eta \in (0,\bar{\eta}(K)]$, we have
\begin{align*}
\rho(1, 0) \leq \rho(\eta, K) < 1.
\end{align*}
\end{lemma}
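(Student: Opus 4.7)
The plan is to use Lemmas~\ref{lemma:uk-wt-square-exp-equality} and~\ref{lemma:P-wt-square-inequality} to reduce the ratio of interest to a quotient of values of $p_m$, and then solve the cubic recurrence defining $p_t$ explicitly. Throughout I would write $a_1 = 1-\eta+\eta\lambda_1$, $a = 1-\eta+\eta\lambda_2$, and $s = \sqrt{(2-2\eta+\eta(\lambda_1+\lambda_2))\,\eta\Delta}$, so that $s^2 = a_1^2 - a^2$, $\alpha_1(\eta) = 4a_1^2$, and $\alpha_2(\eta) = 4a^2 = 4\beta(\eta)$; these substitutions will make the characteristic roots transparent.

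First, invoke Lemma~\ref{lemma:uk-wt-square-exp-equality} for each $k$. Summing the numerator for $k \geq 2$, the deterministic piece contributes $\sum_{k=2}^d p_m(\alpha_k,\beta) E[(u_k^T w_0)^2]$, which I would bound by $p_m(\alpha_2,\beta) \sum_{k=2}^d E[(u_k^T w_0)^2]$ after verifying monotonicity of $p_m(\cdot,\beta)$ on $\{\alpha_k : k \geq 2\}$ via the characteristic-root structure derived below. The stochastic piece is handled by Lemma~\ref{lemma:P-wt-square-inequality}: each residual $E[w_r^T P M_k P w_r]$ is at most $\eta^2 P_{1,r}(K) \sum_{k=2}^d E[(u_k^T w_0)^2]$, and the $q$-weights together with the extra factor $d$ all fold into a single generic polynomial $\eta^4 P_{1,m}(K)$. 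For the denominator I would use that $M_1 = u_1 u_1^T$ and hence $E[w_r^T P M_1 P w_r] = E[(u_1^T P w_r)^2] \geq 0$; provided $q_{m-r-1}(\alpha_1,\beta) \geq 0$ (verified by showing that the $\beta$-mode in the characteristic decomposition of $q_t$ does not dominate the two $(a_1 \pm s)^2$ modes), this gives $E[(u_1^T w_m)^2] \geq p_m(\alpha_1,\beta) E[(u_1^T w_0)^2]$. Dividing then produces the decomposition $\rho(\eta,K) = g(\eta) + h(\eta,K)$.

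To obtain the closed form, I would solve the cubic recurrence. Its characteristic polynomial factors cleanly as $(r-\beta)\bigl(r^2 - (\alpha - 2\beta)r + \beta^2\bigr)$, with nontrivial roots $(a_1 \pm s)^2$ when $\alpha = \alpha_1$, and a triple root at $\beta$ when $\alpha = \alpha_2 = 4\beta$. The three initial conditions $p_0 = 1$, $p_1 = \alpha/4$, $p_2 = (\alpha/2-\beta)^2$ pin down the coefficients. For $k = 2$ they collapse to $p_m(\alpha_2,\beta) = \beta^m = a^{2m}$. For $k = 1$, matching three initial conditions against the three-term general solution $e_1 \beta^m + e_2 (a_1+s)^{2m} + e_3 (a_1-s)^{2m}$ forces $e_1 = 0$ and yields a compact quadratic expression for $p_m(\alpha_1,\beta)$, from which the stated bracketed formula for $g(\eta)$ follows.

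Monotonicity of $g$ then follows from logarithmic differentiation of the closed form: $s$ is increasing in $\eta$ on $(0,1]$ and the dominant root $(a_1+s)^2$ grows faster in $\eta$ than $a^2 = \beta$, driving the quotient $p_m(\alpha_2,\beta)/p_m(\alpha_1,\beta)$ down. The left inequality $\rho(1,0) \leq \rho(\eta,K)$ is then immediate since $g$ is decreasing and $h \geq 0$. For the right inequality, $g(\eta) < 1$ on $(0,1]$ follows strictly from the eigen-gap $\Delta > 0$, which makes $p_m(\alpha_1,\beta) > p_m(\alpha_2,\beta)$, while $h(\eta,K) = \eta^4 P_{1,m}(K) \to 0$ as $\eta \to 0$, so $\bar\eta(K)$ can be chosen so that $h(\eta,K) < 1 - g(\eta)$ on $(0,\bar\eta(K)]$. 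The main obstacle I anticipate is the closed-form derivation itself: the $\beta$-mode has a generically nonzero coefficient and only cancels due to the specific initial data, so the bookkeeping that reconciles the three-term general solution with the compact quadratic in the statement is the nontrivial algebraic core. A secondary difficulty is the sign analysis for the $q$-weighted residual used in the denominator bound, which must be uniform in $r$ so as not to introduce any $\eta$-dependence into the coefficients of $P_{1,m}(K)$.
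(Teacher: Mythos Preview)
Your plan mirrors the paper's proof: both combine Lemmas~\ref{lemma:uk-wt-square-exp-equality} and~\ref{lemma:P-wt-square-inequality}, drop the nonnegative stochastic piece from the denominator, bound the numerator's $p$- and $q$-coefficients at $\alpha_k$ by their values at $\alpha_2$, and solve the cubic recurrence through the factorization $(r-\beta)\bigl(r^2-(\alpha-2\beta)r+\beta^2\bigr)$. One cosmetic slip: $M_k$ is not $u_ku_k^T$ but $(C_t-C)u_ku_k^T(C_t-C)$; your conclusion $E[w_r^T P M_1 P w_r]\ge 0$ is still correct since this equals $E[(w_r^T P(C_t-C)u_1)^2]$, which is how the paper argues it.

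There is, however, a genuine gap in your last step. You write that $g(\eta)<1$ on $(0,1]$ and $h(\eta,K)\to 0$ as $\eta\to 0$, and infer that $\bar\eta(K)$ can be chosen with $h(\eta,K)<1-g(\eta)$ on $(0,\bar\eta(K)]$. This does not follow: since $g(0)=1$, the margin $1-g(\eta)$ also tends to $0$ as $\eta\to 0$, and you must compare rates. The paper closes this by computing $g'(0)=-2m^2(\lambda_1-\lambda_2)$ (via the chain rule through $\gamma(\eta)=\alpha_1(\eta)/\beta(\eta)$, or equivalently a first-order Taylor expansion of your closed form), so that $1-g(\eta)\sim 2m^2\Delta\,\eta$, which dominates $h(\eta,K)=\eta^4 P_{1,m}(K)=O(\eta^4)$. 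In your notation this amounts to observing that $s\sim\sqrt{2\Delta\,\eta}$ as $\eta\to 0$, forcing the denominator $(a_1+s)^m+(a_1-s)^m$ to exceed $2a^m$ by an amount of order $\eta$, not merely $o(1)$; without this quantitative step the existence of $\bar\eta(K)$ is not established.
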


While Lemmas~\ref{lemma:uk-wt-square-exp-equality},~\ref{lemma:P-wt-square-inequality} and \ref{lemma:single-epoch-convergence} deal with a single epoch, the next result establishes the convergence of the overall algorithm.

\begin{theorem}
\label{theorem:algorithm-convergence}
Suppose we execute Algorithm \ref{alg:VRPowerHB} for $s$ epochs starting from an initial unit vector $\tilde{w}_0$ such that $u_1^T\tilde{w}_0 \neq 0$. There exists some $0 < \bar{\eta}(K)$ such that for every $\eta \in (0,\bar{\eta}(K)]$, we have
\begin{align}
\frac{\sum_{k=2}^d E[(u_k^T \tilde{w}_s)^2]}{E[(u_1^T \tilde{w}_s)^2]} 
&\leq \rho(\eta, K)^s \bigg(\frac{1-(u_1^T\tilde{w}_0)^2}{(u_1^T\tilde{w}_0)^2}\bigg)
\label{eq:main-theorem}
\end{align}
where $0<\rho (\eta, K) <1$.
\end{theorem}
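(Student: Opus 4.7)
The plan is to reduce the theorem to $s$ applications of the single-epoch bound in Lemma~\ref{lemma:single-epoch-convergence}. Writing $A_s := \sum_{k=2}^d E[(u_k^T\tilde{w}_s)^2]$ and $B_s := E[(u_1^T\tilde{w}_s)^2]$, where the expectation averages over every mini-batch drawn in epochs $0,\ldots,s-1$, the target inequality becomes $A_s/B_s \le \rho(\eta,K)^s\,A_0/B_0$. The base case is settled by orthonormality of $\{u_k\}$ together with $\|\tilde{w}_0\|_2 = 1$, which give $A_0 = 1 - (u_1^T\tilde{w}_0)^2$ and $B_0 = (u_1^T\tilde{w}_0)^2$; the hypothesis $u_1^T\tilde{w}_0 \ne 0$ guarantees $B_0 > 0$, so the ratio is well defined. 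The last clause of Lemma~\ref{lemma:single-epoch-convergence} supplies some $\bar{\eta}(K) > 0$ with $\rho(\eta,K) < 1$ on $(0,\bar{\eta}(K)]$, so linear decay is automatic once the recurrence is in hand.

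The one-step recurrence $A_{s+1}/B_{s+1} \le \rho(\eta,K)\,A_s/B_s$ is the crux. First I would condition on $\tilde{w}_s$ and observe that the mini-batches $S_1,\ldots,S_{m-1}$ used inside epoch $s$ are sampled independently of $\tilde{w}_s$, so the derivations of Lemmas~\ref{lemma:uk-wt-square-exp-equality} and \ref{lemma:P-wt-square-inequality} go through verbatim with expectations conditional on $\tilde{w}_s$, producing the same identities and upper bounds but with $(u_k^T\tilde{w}_s)^2$ in the place of $E[(u_k^Tw_0)^2]$. Because those conditional relations are linear in $(u_k^T\tilde{w}_s)^2$ and in $\sum_{k=2}^d (u_k^T\tilde{w}_s)^2$, taking an outer expectation turns them into the unconditional forms of Lemmas~\ref{lemma:uk-wt-square-exp-equality} and \ref{lemma:P-wt-square-inequality} with $E[(u_k^T\tilde{w}_s)^2]$ and $A_s$ on the right-hand side. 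The same algebraic manipulation used to extract $\rho(\eta,K) = g(\eta) + h(\eta,K)$ in Lemma~\ref{lemma:single-epoch-convergence} then yields exactly $A_{s+1}/B_{s+1} \le \rho(\eta,K)\,A_s/B_s$. Iterating $s$ times and combining with the base case delivers \eqref{eq:main-theorem}.

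The hard part is precisely the passage from conditional to unconditional statements, because Lemma~\ref{lemma:single-epoch-convergence} is itself phrased as a bound on a ratio of expectations, and such ratios do not commute with conditioning on the random $\tilde{w}_s$; one cannot simply ``apply the lemma inside an outer expectation.'' The way around this is the observation above: Lemmas~\ref{lemma:uk-wt-square-exp-equality} and \ref{lemma:P-wt-square-inequality} are \emph{linear} in the starting-point quantities, so the outer expectation only replaces $(u_k^T\tilde{w}_s)^2$ by $E[(u_k^T\tilde{w}_s)^2]$, after which the proof of the single-epoch bound can be rerun unchanged. Once this subtlety is spelled out, the remainder of the theorem is a routine induction with geometric rate $\rho(\eta,K)^s$ and the choice of $\bar{\eta}(K)$ inherited from Lemma~\ref{lemma:single-epoch-convergence}.
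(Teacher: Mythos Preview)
Your proposal is correct and follows essentially the same route as the paper. The paper's proof simply applies Lemma~\ref{lemma:single-epoch-convergence} once per epoch and iterates; the subtlety you carefully unpack---that one must pass from conditional to unconditional bounds via the linearity of Lemmas~\ref{lemma:uk-wt-square-exp-equality} and \ref{lemma:P-wt-square-inequality}---is already baked into the paper's formulation of those lemmas, which are stated with $E[(u_k^T w_0)^2]$ (rather than a deterministic $(u_k^T w_0)^2$) on the right-hand side, so Lemma~\ref{lemma:single-epoch-convergence} applies directly to a random epoch start without further comment.
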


Lemma~\ref{lemma:single-epoch-convergence} provides a convergence rate for a single epoch where the rate of convergence $\rho(\eta,K)$ consists of the expected rate $g(\eta)$ and the additional variance term $h(\eta,K)$ arising from stochastic errors. Owing to the momentum term in \eqref{eq:VR-power-momentum}, $g(\eta)$ depends inversely on the square root of the eigen-gap $\Delta$, making it appealing when $\Delta$ is small. In order to benefit from the $\sqrt{\Delta}$ term, $\eta$ should not be too small. However, as increasing $\eta$ also enlarges $h(\eta,K)$, $K$ must be controlled in order to achieve stochastic acceleration. Since decreasing $K$ does not affect $g(\eta)$ but reduces the stochastic variance term $h(\eta,K)$, a large $\eta$ can be tolerated with a small $K$. However, even if $K$ is large, corresponding to a small mini-batch size, we can still obtain convergence by choosing a sufficiently small $\eta$. Since the stochastic variance term $h(\eta,K)$ vanishes very quickly as $\eta$ approaches $0$, we can always make $\rho(\eta,K)$ smaller than 1 by selecting a sufficiently small $\eta$ as showed in Lemma~\ref{lemma:single-epoch-convergence}. On the other hand, since $g(\eta)$ is a decreasing function of $\eta$, $\rho(\eta,K)$ is lower bounded by $\rho(1,0)$ for any $\eta \in (0,1]$.

Compared to VR-PCA and VR Power+M, VR HB Power is favorable since it works with any size of the mini-batch, yet enjoys acceleration when the mini-batch size is sufficiently large. Unlike VR Power+M, VR-PCA works with any size of the mini-batch size but does not accelerate when the size of the mini-batch is large since its rate does not depend inversely on the square root of $\Delta$. On the other hand, although the convergence rate of VR Power+M has an inverse dependency on the square root of $\Delta$, it lacks the measure to control the progress of the algorithm, making it fail to converge when the mini-batch size is not sufficiently large. Moreover, even when the mini-batch size is large, its performance cannot be superior to VR HB Power since it is a special case of VR HB Power with $\eta$ being $1$. By appropriately choosing the step size $\eta$, VR HB Power performs no worse than VR Power+M in all cases. Table \ref{table:summary-convergence-rate} summarizes the convergence rates of these algorithms. Note that our convergence rate for VR Power+M is established by setting $\eta = 1$ in the rate for VR HB Power. The rate for VR-PCA is derived by using the recurrence polynomials without the momentum term.

Theorem \ref{theorem:algorithm-convergence} provides a convergence result for the entire algorithm. Specifically, it establishes the global linear convergence of the algorithm where the ratio of the expectation of the components orthogonal to $u_1$ to the expectation of the component aligned with $u_1$ goes to zero at a rate of $\rho(\eta,K)<1$. Note that the rate of convergence $\rho(\eta,K)$ does not depend on the epoch but is kept the same across the epochs. Although VR-PCA and VR Power+M practically converge regardless of the initial iterate $\tilde{w}_0$, there has been no analysis proving their global convergence. However, by following the techniques developed in this section, their global convergence is proved by our framework for any initial iterate $\tilde{w}_0$ and in expectation (as opposed to the probabilistic statements in \cite{shamir2015stochastic} and \cite{xu2018accelerated}).

Note that the condition $\eta \in (0, \bar{\eta}(K)]$ and $\rho = \rho(\eta,K)$ can be changed to there exists $0<\eta_2(K)$ such that for every $\eta_1 \in (0, \eta_2(K))$ and $\eta \in [\eta_1, \eta_2(K)]$, inequality \eqref{eq:main-theorem} holds for $\rho = \rho(\eta_1,K)$. This can be easily established based on the proof.

\section{Practical Considerations}
\label{sec:practical-considerations}
In this section, we discuss some practical considerations of the algorithm. 
First, to make the algorithm numerically stable, we consider
\begin{align*}
w_t \leftarrow w_t /\|w_{t+1}\|_2, \quad w_{t+1} \leftarrow w_{t+1} /\|w_{t+1}\|_2
\end{align*}
at the end of the inner loop after updating $w_{t+1}$ as introduced in \cite{xu2018accelerated}. Since the above scaling scheme does not impact the sample path of $w_t/\|w_t\|$, the same result can be obtained with numerical stability.

Another important issue is the estimation of $\lambda_2$, which is involved in determining the value of the momentum parameter. Using the two-loop structure of the algorithm and the fact that the gradients of the outer-loop iterates $\tilde{w}_s$ are exactly computed, we estimate $\lambda_2$ using the two consecutive outer-loop iterates $\tilde{w}_{s-1}$ and $\tilde{w}_{s}$ at a regular interval.

Using the Rayleigh quotient and the second eigenvector $u_2$ of the covariance matrix $C$, the second eigenvalue $\lambda_2$ can be expressed as 
\begin{align}
\lambda_2 = \frac{u_2^TCu_2}{u_2^T u_2}.
\label{eq:lambda2}
\end{align}
In deterministic power iteration and its variants, an outer-iterate $\tilde{w}_s$ first approaches the subspace spanned by $u_1$ and $u_2$ before converging to $u_1$.
After a number of outer-iterations, vector $\tilde{w}_s$ can be approximated by a linear combination of $u_1$ and $u_2$ and the component of $u_1$ becomes dominant as the iterations proceed.

Based on this observation, we estimate $u_2$ using two consecutive outer-loop iterates $\tilde{w}_s$ and $\tilde{w}_{s-1}$ as
\begin{align}
\hat{u}_{2,s} &= \tilde{w}_{s-1} - (\tilde{w}_{s-1}^T \tilde{w}_{s}) \tilde{w}_{s}.
\label{eq:u2-hat}
\end{align}
The idea of the above estimation is to project $\tilde{w}_{s-1}$ to the space orthogonal to $\tilde{w}_{s}$.
If $\tilde{w}_{s} \approx u_1$ and $\tilde{w}_{s-1} \approx \alpha_1 u_1 + \alpha_2 u_2$ for some $\alpha_1, \alpha_2(\neq 0)$, we have $\hat{u}_{2,s} \approx u_2$. 
By substituting $u_2$ with $\hat{u}_{2,s}$ in \eqref{eq:lambda2}, we obtain
\begin{align}
\hat{\lambda}_{2,s} = & \frac
{\tilde{w}_{s-1}^TC\tilde{w}_{s-1} - 2 \theta_s \tilde{w}_s^TC \tilde{w}_{s-1} + \theta_s^2 \tilde{w}_s^TC\tilde{w}_s}{1-\theta_s^2}
\label{eq:lambda2-hat}
\end{align}
where 
\begin{align*}
\theta_s = \tilde{w}_{s-1}^T\tilde{w}_{s}.
\end{align*}
While two matrix-vector multiplications, $C\tilde{w}_{s-1}$ and $C\tilde{w}_{s}$, are involved in computing \eqref{eq:lambda2-hat}, they incur no extra computation since they are the exact gradients of $\tilde{w}_{s-1}$ and $\tilde{w}_{s}$, which are computed regardless of the estimation. As a result, we can obtain $\hat{\lambda}_2$ by only computing inner products. This update is repeated at the start of each outer-loop iteration after computing $\tilde{g}$ followed by setting the momentum parameter $\beta_s$ at outer iteration $s$ as
\begin{align*}
\beta_s = (1-\eta+\eta \hat{\lambda}_{2,s})^2.
\end{align*}
\begin{figure*}[h]
\centering
\includegraphics[trim={4cm 0 3cm 1cm},clip,scale=0.39]{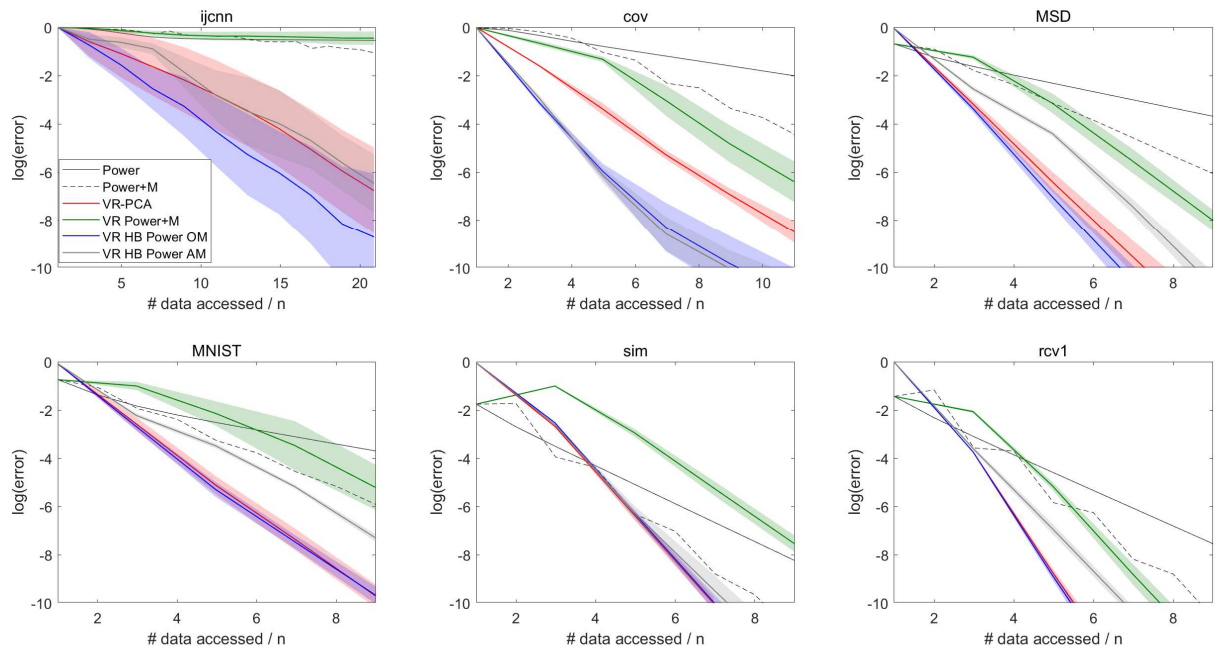}
\caption{Experimental Results (small-batch)}
\label{fig:small-batch}
\end{figure*}

\section{Numerical Experiments}
\label{sec:numerical-experiments}
\begin{figure*}[h]
\centering
\includegraphics[trim={4cm 0 3cm 1cm},clip,scale=0.39]{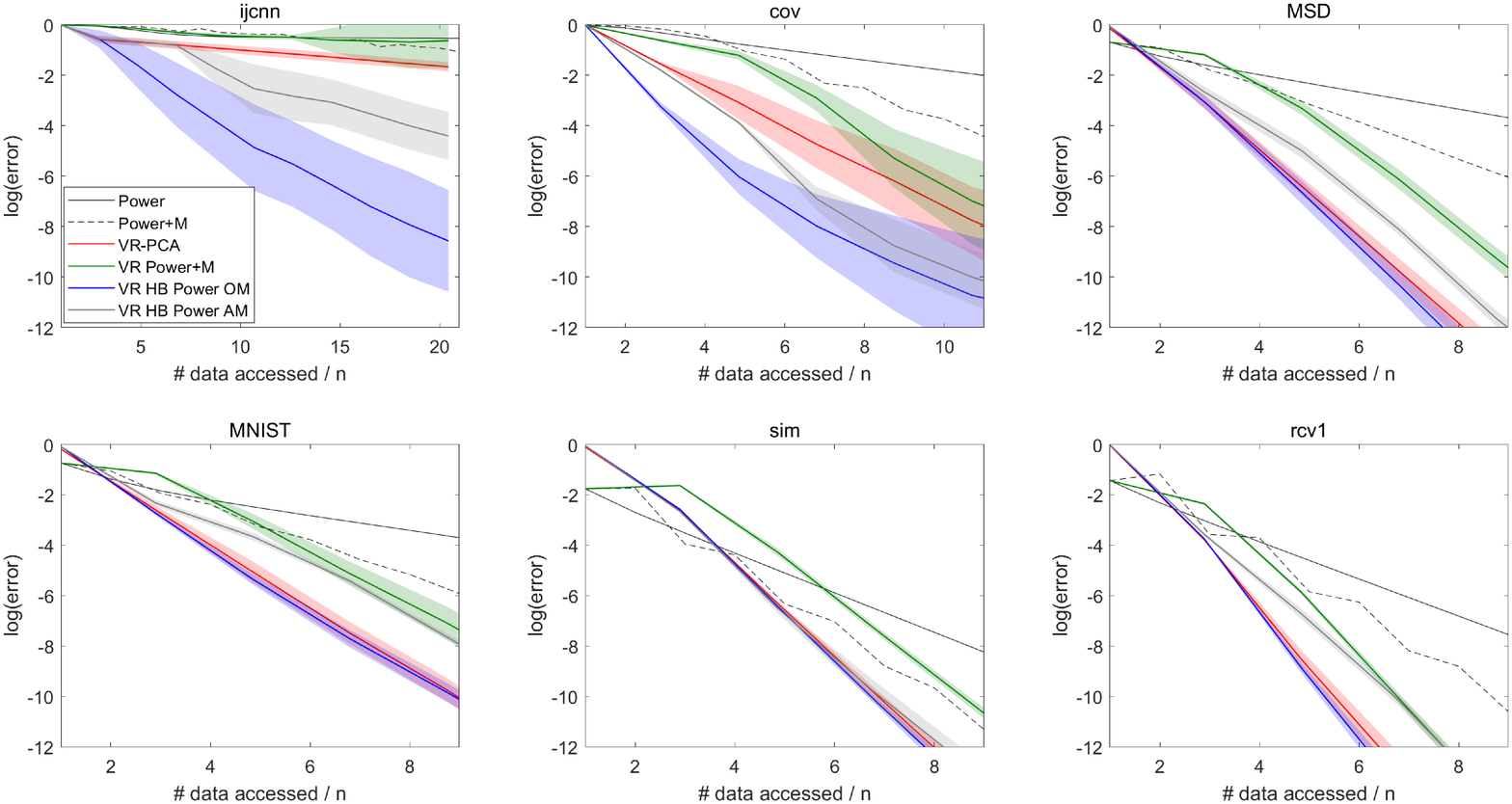}
\caption{Experimental Results (large-batch)}
\label{fig:large-batch}
\end{figure*}
In this section, we numerically compare the performance of VR HB Power with that of (i) Power, (ii) Power+M, (iii) VR-PCA, and (iv) VR Power+M on finding the first eigenvector $u_1$ of covariance matrix $C$ constructed by data vectors $a_i, i=1,\ldots,n$ using real world datasets. We include variants of power iteration and other algorithms are excluded due to the complexity of implementation.

\subsection{Datasets}
The datasets include ijcnn \cite{prokhorov2001ijcnn}, covertype \cite{blackard1999comparative}, YearPredictionMSD \cite{Bertin-Mahieux2011}, MNIST \cite{lecun1998gradient}, real-sim and rcv1 \cite{lewis2004rcv1} as summarized in Tabel~\ref{table:datasets}. All of them are obtained either from the UCI repository \cite{Dua:2017} or the LIBSVM library \cite{chang2011libsvm} and they are carefully chosen to incorporate a variety of datasets in terms of size and eigen-gap.
\begin{table}[h]
\vspace{-0.25cm}
\caption{Datasets}
\label{table:datasets}
\begin{center}
\begin{small}
\begin{sc}
\begin{tabular}{lrrrc}
\toprule
dataset           & $n$ & $d$ & sparsity & $\lambda_2 / \lambda_1$ \\ \midrule
icjnn(test)             & 91,701           & 22   & 59.09 \% & 0.9921  \\ 
cov         & 581,012          & 54     & 22.00 \%  & 0.7894  \\
MSD               & 463,715          & 90 & 100.00 \% & 0.6776  \\
MNIST             & 70,000           & 764  & 1.96 \% & 0.7167  \\
sim         & 72,309           & 20,958  & 0.24 \% & 0.4053 \\
rcv1              & 804,414          & 47,236   & 0.16 \% & 0.4289  \\
\bottomrule
\end{tabular}
\end{sc}
\end{small}
\end{center}
\vskip -0.1in
\end{table}
The first four datasets are standardized with a mean of zero and standard deviation of one while the last two datasets are scaled to range between $0$ and $1$ to preserve their sparsity.

\subsection{Settings}
Since we consider the mini-batch setting of variance reduction methods (VR-PCA, VR Power+M, VR HB Power) which have the two-loop structure, it is necessary to choose the epoch length $m$ and the mini-batch size $|S|$. For the choice of $m$ and $|S|$, it is common to select $m$ and $|S|$ such that $m \cdot |S|=n$. Following this principle, we consider $|S|=1\% \cdot n$ and $m = 100$ for the small-batch case and $|S|=5\% \cdot n$ and $m = 20$ for the large-batch case. For the momentum parameter $\beta$ in VR HB Power, Power+M, and VR Power+M, we utilize the true value of $\lambda_2$ for Power+M and VR Power+M and consider both the true value of $\lambda_2$ (VR HB Power OM) and the adaptive estimation procedure of $\lambda_2$ (VR HB Power AM) presented in Section~\ref{sec:practical-considerations} for VR HB Power. For numerical stability, the scaling scheme from Section~\ref{sec:practical-considerations} is also used for VR HB Power and VR Power+M. To find the best performance, the step sizes $\eta$ in VR HB Power OM and VR-PCA are chosen using grid search and the step size in VR HB Power AM is set to that of VR HB Power OM.

\subsection{Results}
Figure~\ref{fig:small-batch} and Figure~\ref{fig:large-batch} display the experimental results for the small and large batch cases, respectively. In the figures, the x-axis represents the number of data points accessed over the number of total data points and the y-axis represents the error gap, $1-(\tilde{w}_s^Tu_1)^2$, in the log-scale. For each case, the stochastic algorithms are repeated 10 times by varying the random seed. The lines represent their means and the values within one standard deviation away from the means are shaded.

As shown in the figures, with the true value of $\lambda_2$, VR HB Power OM consistently outperforms the other algorithms in both cases. Particularly, it surpasses the other algorithms by a large margin when $\lambda_2/\lambda_1$ is close to $1$ as seen in the cases of ijcnn and cov. If $\lambda_2/\lambda_1$ is not large, its performance is similar to that of VR-PCA in the small-batch setting and slightly better in the large-batch setting as shown in the cases of MNIST, sim, and rcv1. While VR-PCA is competitive to VR HB Power OM when $\lambda_2/\lambda_1$ and $|S|$ are small, VR Power+M always falls behind VR-PCA and VR HB Power OM. Moreover, it fails to converge in the small batch setting of ijcnn and is sometimes inferior to Power and Power+M.

On other hand, VR HB Power AM does not have the same performance as VR HB Power OM. However, as $\hat{\lambda}_{2,s}$ approaches $\lambda_2$, it asymptotically attains the same rate of convergence. If $\lambda_2/\lambda_1$ is close to $1$, slow progress at the start can be compensated by asymptotic performance as $\hat{\lambda}_{2,s}$ approaches $\lambda_2$. Also, the estimation of $\lambda_2$ becomes stable if the batch size is large. Therefore, it exhibits superior performance to VR-PCA and VR Power+M when $\lambda_2/\lambda_1$ is close to $1$ and the size of the mini-batch is large as seen in the cases of ijcnn and cov in Figure~\ref{fig:large-batch}.

\section{Conclusion}
In this paper, we present a stochastic heavy ball power iteration algorithm for solving PCA and present a convergence analysis for it. By incorporating a step size, the presented algorithm works with any size of the mini-batch and attains acceleration if the mini-batch size is large, making it attractive in parallel settings. In the convergence analysis, we show that our algorithm attains global linear convergence to the first eigenvector of the covarince matrix in expectation. This result is analogous to those of stochastic variance-reduced gradient methods for convex optimization but has been never shown for previous stochastic power iteration algorithms since it requires completely different techniques. We stress that our analysis is the first such analysis and its framework can be applied to analyze previous stochastic power iteration algorithms, showing their global linear convergence in expectation. The experimental results show that if $\lambda_2$ is known, our algorithm consistently outperforms other algorithms, especially when the eigen-gap is small. Even if $\lambda_2$ is unknown, our algorithm can be run with the adaptive estimation procedure of $\lambda_2$. The numerical experiments exhibit that it still outperforms the previous stochastic algorithms if the eigen-gap is small and the size of the mini-batch is large.

\color{black}
\newpage
\bibliography{example_paper}
\bibliographystyle{icml2019}

\appendix

\onecolumn
\section{Supplimentary Material}
\subsection{Main Results}
\begin{proof}[Proof of Lemma~\ref{lemma:uk-wt-square-exp-equality}]
From
\begin{align*}
w_1 &= (1- \eta)w_0 + \eta \tilde{g} \\
&= (1- \eta)w_0 + \eta Cw_0,
\end{align*}
we have
\begin{align}
u_k^Tw_1 &= (1-\eta) u_k^Tw_0 + \eta u_k^TC w_0 \nonumber \\
&= (1-\eta) u_k^Tw_0 + \eta \lambda_ku_k^Tw_0 \nonumber \\
&= (1-\eta+\eta \lambda_k) u_k^Tw_0. \label{eq:uk_w1}
\end{align}
Taking the expectation of the square of \eqref{eq:uk_w1}, we obtain
\begin{align}
E[(u_k^Tw_1)^2] = (1-\eta+\eta \lambda_k)^2 E[(u_k^Tw_0)^2] = \frac{\alpha_k(\eta)}{4} E[(u_k^Tw_0)^2].
\label{eq:uk-w1-square-exp}
\end{align}
Next, from \eqref{eq:VR-power-momentum}, we have
\begin{align}
w_{t+1} &= 2 \bigg( (1-\eta) w_{t} + \eta \frac{1}{|S_{t}|} \sum_{i_{t} \in S_{t}} a_{i_{t}}a_{i_{t}}^T \Big( w_{t}- \frac{(w_{t}^T w_0)}{\|w_0\|^2} w_0 \Big) + \frac{(w_{t}^T w_0)}{\|w_0\|^2} \tilde{g} \bigg) - \beta(\eta) w_{t-1} \nonumber \\ 
&= 2 \bigg( (1-\eta) w_{t} + \eta \frac{1}{|S_{t}|} \sum \limits_{i_{t} \in S_{t}} a_{i_{t}}a_{i_{t}}^T \Big( I-\frac{w_0 w_0^T}{\|w_0\|^2} \Big) w_{t} + C \frac{w_0 w_0^T}{\|w_0\|^2} w_t \bigg) - \beta(\eta) w_{t-1} \nonumber \\ 
&= 2 \bigg( (1-\eta) w_{t} + \eta Cw_{t} + \eta \frac{1}{|S_{t}|} \sum_{i_{t} \in S_{t}} (a_{i_{t}} a_{i_{t}}^T - C)\Big( I-\frac{w_0w_0^T}{\|w_0\|^2} \Big) w_{t} \bigg) - \beta(\eta) w_{t-1} \nonumber \\
&= 2 \big( (1-\eta) w_{t} + \eta Cw_{t} + \eta (C_t - C)P w_{t} \big) - \beta(\eta) w_{t-1}, \label{eq:VR-HVB-wt}
\end{align}
leading to
\begin{align}
u_k^Tw_{t+1} = 2 \big( (1-\eta+\eta \lambda_k) u_k^Tw_{t} + \eta u_k^T (C_{t}-C) P w_{t} \big) - \beta(\eta) u_k^Tw_{t-1}. \label{eq:uk-wt}
\end{align}
Taking the square of \eqref{eq:uk-wt}, we have
\begin{align}
(u_k^Tw_{t+1})^2 &= 4(1-\eta+\eta \lambda_k)^2 (u_k^Tw_{t})^2 + 4\eta^2 w_{t}^T P (C_{t}-C) u_k u_k^T (C_{t}-C) P w_{t} + (\beta(\eta))^2 (u_k^Tw_{t-1})^2 \nonumber \\
& \quad + 8 \eta (1-\eta+\eta \lambda_k) u_k^T w_{t} u_k^T (C_{t}-C)Pw_{t} - 4(1-\eta+\eta \lambda_k) \beta(\eta) u_k^Tw_{t}u_k^Tw_{t-1} \nonumber \\
& \quad - 4\eta \beta(\eta) u_k^T(C_{t}-C)Pw_{t} u_k^T w_{t-1}. \label{eq:VR-HVB-uk-wt-square}
\end{align}
Since $S_t$ is sampled uniformly at random, $C_t$ is independent of $S_1, \ldots, S_{t-1}$ and identically distributed with $E[C_t]=C$. Therefore,
\begin{align*}
E[u_k^Tw_{t}(C_{t}-C)Pw_{t}] 
= E[E[ u_k^Tw_{t} u_k^T(C_{t}-C)Pw_{t} | w_0, S_1, \ldots, S_{t-1}]] = E[u_k^Tw_{t} u_k^T E[C_{t}-C] Pw_{t}] = 0.
\end{align*}
Similarly, we have
\begin{align}
E[u_k^T(C_{t}-C)Pw_{t} u_k^T w_{t-1}] = 0.
\label{eq:exp-0}
\end{align}
As a result, we obtain
\begin{align}
E[(u_k^Tw_{t+1})^2] &= \alpha_k(\eta) E[(u_k^T w_{t})^2] - 2 \sqrt{\alpha_k(\eta)} \beta(\eta) E[(u_k^Tw_{t})(u_k^Tw_{t-1})] + (\beta(\eta))^2 E[(u_k^T w_{t-1})^2] \nonumber \\
&\quad + 4 \eta^2 E[w_{t}^T P M_k P w_{t}]. \label{eq:uk-wt-square-1}
\end{align}
Using \eqref{eq:uk_w1} and \eqref{eq:uk-w1-square-exp} for $t=1$ in \eqref{eq:uk-wt-square-1}, we have
\begin{align}
E[(u_k^Tw_2)^2] = \Big( \frac{\alpha_k(\eta)}{2} - \beta(\eta) \Big)^2 E[(u_k^T w_0)^2] + 4\eta^2 E[w_{1}^T P M_k P w_{1}]. \label{eq:uk_w2-square}
\end{align}
Moreover, by using \eqref{eq:uk-wt} with $t-1$, multiplying it with $u_k^Tw_{t-1}$, taking expectation and using \eqref{eq:exp-0} with $w_t$ being $w_{t-1}$ (which can be derived in the same way as \eqref{eq:exp-0}) , we have
\begin{align}
\label{eq:uk-wt-wt-1}
E[(u_k^Tw_{t})(u_k^Tw_{t-1})] = \sqrt{\alpha_k(\eta)} E[(u_k^T w_{t-1})^2] - \beta(\eta) E[(u_k^T w_{t-1})(u_k^T w_{t-2})].
\end{align}
Using \eqref{eq:uk-wt-wt-1}, we can further write \eqref{eq:uk-wt-square-1} as
\begin{align}
E[(u_k^Tw_{t+1})^2] &= \alpha_k(\eta) E[(u_kw_{t})^2] - 
\beta(\eta) ( 2\alpha_k(\eta) - 
\beta(\eta) ) E[(u_k^T w_{t-1})^2] \nonumber \\
&\quad + 2 \sqrt{\alpha_k(\eta)} (\beta(\eta))^2  E[(u_k^T w_{t-1})(u_k^T w_{t-2})] + 4 \eta^2 E[w_{t}^T P M_k P w_{t}]. \label{eq:uk-wt-square-2}
\end{align}
With $t-1$ in \eqref{eq:uk-wt-square-1}, we have
\begin{align}
E[(u_k^Tw_{t})^2] &= \alpha_k(\eta) E[(u_k^T w_{t-1})^2] - 2 \sqrt{\alpha_k(\eta)} \beta(\eta)  E[(u_k^Tw_{t-1})(u_k^Tw_{t-2})] + (\beta(\eta))^2 E[(u_k^T w_{t-2})^2] \nonumber \\
& \quad + 4 \eta^2 E[w_{t-1}^T P M_k P w_{t-1}]. \label{eq:uk-wt-1-square-1}
\end{align}
Adding \eqref{eq:uk-wt-1-square-1} multiplied by $\beta(\eta)$ to \eqref{eq:uk-wt-square-2}, we obtain
\begin{align}
E[(u_k^Tw_{t+1})^2] & = (\alpha_k(\eta) - \beta(\eta)) E[(u_k^Tw_{t})^2] - \beta(\eta) (\alpha_k(\eta) - \beta(\eta)) E[(u_k^T w_{t-1})^2] + (\beta(\eta))^3 E[(u_k^Tw_{t-2})^2] \nonumber \\
& \quad + 4\eta^2 E[w_{t}^TPM_kPw_{t}] + 4\eta^2 \beta(\eta) E[w_{t-1}^TPM_kPw_{t-1}].
\label{eq:uk-wt-square}
\end{align}
With $t-1$ in \eqref{eq:uk-wt-square}, we finally have
\begin{align}
E[(u_k^Tw_{t})^2] &= (\alpha_k(\eta) - \beta(\eta)) E[(u_k^T w_{t-1})^2] - \beta(\eta) (\alpha_k(\eta) - \beta(\eta)) E[(u_k^T w_{t-2})^2] + (\beta(\eta))^3 E[(u_k^T w_{t-3})^2] \nonumber \\
& \quad + 4\eta^2 E[w_{t-1}^T P M_k P w_{t-1}] + 4\eta^2 \beta(\eta) E[w_{t-2}^T P M_k P w_{t-2}]
\label{eq:uk-wt-square-exp-t-3}
\end{align}
for $t \geq 3$.

\noindent Using Lemma~\ref{lemma:coefficient-inequality} for $E[(u_k^Tw_{t})^2]$ defined by \eqref{eq:uk-w1-square-exp}, \eqref{eq:uk_w2-square}, and \eqref{eq:uk-wt-square-exp-t-3} with
\begin{align*}
\alpha = \alpha_k(\eta), \quad \beta = \beta(\eta), \quad L_0 = E[(u_k^Tw_0)^2], \quad L_t = 4\eta^2 E[w_{t}^TPM_kPw_{t}],
\end{align*}
we have
\begin{align*}
E[(u_k^Tw_t)^2] = p_t(\alpha_k(\eta),\beta(\eta)) E[(u_k^Tw_0)^2] + 4\eta^2 \sum_{r=1}^{t-1} q_{t-r-1}(\alpha_k(\eta),\beta(\eta)) E[w_{r}^TPM_kPw_{r}].
\end{align*}
\end{proof}

\begin{proof}[Proof of Lemma~\ref{lemma:P-wt-square-inequality}]
From Lemma~\ref{lemma:trace}, we have
\begin{align}
E[w_{t}^TPM_kPw_{t}] \leq E[\|M_k\|] E[\|Pw_{t}\|^2] \leq K E[\|Pw_{t}\|^2].
\label{eq:trace-bound}
\end{align}
By the definition of $P$ in \eqref{eq:def-P}, we have
\begin{align}
E[\|Pw_0\|^2] = E \bigg[ \bigg\| \bigg( I- \frac{w_0w_0^T}{\|w_0\|^2} \bigg ) w_0 \bigg\|^2 \bigg] = E[\|w_0 - w_0\|^2] = 0.
\label{eq:P-w0-square-exp}
\end{align}
Using Lemma~\ref{lemma:quadratic-form}, we obtain
\begin{align}
E [\| P w_1 \|^2] &= E \bigg[ \bigg\| \bigg(I - \frac{w_0 w_0^T}{\|w_0\|^2} \bigg) \Big( \eta w_0 + \eta C w_0 \Big) \bigg\|^2 \bigg] \nonumber \\
&= E \bigg[ \bigg \| \eta C w_0 - \eta \frac{w_0 w_0^T C w_0}{\|w_0\|^2} \bigg\|^2 \bigg] \nonumber \\
&= \eta^2 E \bigg[ \|w_0\|^2 \bigg( \frac{w_0^T C^2 w_0}{\|w_0\|^2} - \frac{(w_0^T C w_0)^2}{\|w_0\|^4} \bigg) \bigg] \nonumber \\
&\leq 2 \eta^2 E \bigg[  \|w_0\|^2 \bigg( \lambda_1^2 - \lambda_1^2 \frac{(u_1^Tw_0)^2}{\|w_0\|^2} \bigg) \bigg] \nonumber \\
&= 2 \eta^2 \lambda_1^2 \sum_{k=2}^d E \big[ (u_k^Tw_0)^2 \big] \label{eq:P-w1-square-exp}
\end{align}
where we have used the fact that $u_1,\ldots,u_d$ form an orthonormal basis for the last equality.

For $t \geq 2$, we consider
\begin{align}
Pw_t = 2 \big( P((1-
\eta) I + \eta C)w_{t-1} + \eta P(C_t-C)Pw_{t-1} \big) - \beta(\eta) Pw_{t-2}.
\label{eq:P-wt}
\end{align}
Taking the squared norm of \eqref{eq:P-wt}, we have
\begin{align}
\| Pw_t \|^2 &= \| 2 P((1-
\eta) I + \eta C)w_{t-1} - \beta(\eta) Pw_{t-2} \|^2 + 4 \eta^2 \| P(C_t-C)Pw_{t-1} \|^2 \nonumber \\
& \quad + 4 \eta ( 2 P((1-
\eta) I + \eta C)w_{t-1} - \beta(\eta) Pw_{t-2} )^T P (C_t-C)Pw_{t-1}.\label{eq:VR-HVB-Pw-projection}
\end{align}
Similarly to \eqref{eq:exp-0}, we have
\begin{align*}
E[( P((1-
\eta) I + \eta C)w_{t-1} - \beta(\eta) Pw_{t-2} )^T P (C_t-C)Pw_{t-1}] = 0,
\end{align*}
resulting in
\begin{align}
E[\| Pw_t \|^2] = E[\| 2P((1-
\eta) I + \eta C)w_{t-1} - \beta(\eta) Pw_{t-2} \|^2] + 4 \eta^2 E[\| P(C_t-C)Pw_{t-1} \|^2]. \label{eq:VR-HVB-Pw-projection}
\end{align}
By the triangle inequality and $(a+b)^2 \leq 2(a^2+b^2)$ for all $a,b$, we have
\begin{align}
E[ \| 2P((1-
\eta) I + \eta C)w_{t-1} - \beta(\eta) Pw_{t-2} \|^2] &\leq E[ ( \| 2P((1-
\eta) I + \eta C)w_{t-1} \| + \beta(\eta) \| Pw_{t-2} \|)^2] \nonumber \\
&\leq 2 E[ \| 2P((1-
\eta) I + \eta C)w_{t-1} \|^2] + 2 (\beta(\eta))^2 E[ \| Pw_{t-2} \|^2]. \label{eq:VR-HVB-recurrence-Pw-2}
\end{align}
Using the definition of the spectral norm, we further have
\begin{align}
E [\| 2P((1-
\eta) I + \eta C)w_{t-1} \|^2] &= 4 E[\| ((1-\eta)I+ \eta C)P w_{t-1} + \eta(PC-CP) w_{t-1} \|^2] \nonumber \\
&\leq 8 E[\| ((1-\eta)I+ \eta C)P w_{t-1} \|^2] + 8 E[\| \eta(PC-CP) w_{t-1} \|^2] \nonumber \\
&\leq 8 E[\| (1-\eta) I+ \eta C \|^2 \| P w_{t-1} \|^2] + 8 \eta^2 E[\| PC-CP \|^2 \| w_{t-1} \|^2] \nonumber \\
&\leq 8 (1-\eta+\eta \lambda_1)^2 E[\| P w_{t-1} \|^2] + 8 \eta^2 E[\| PC-CP \|^2 \| w_{t-1} \|^2]. \label{eq:VR-HVB-recurrence-Pw-4}
\end{align}
From Lemmas~\ref{lemma:linear-algebra} and~\ref{lemma:quadratic-form}, we have
\begin{align}
E[\| PC-CP \|^2 \| w_{t-1} \|^2] &\leq E \bigg[ \bigg( \frac{w_0^TC^2w_0}{\| w_0\|^2} - \frac{(w_0^TCw_0)^2}{\| w_0\|^4} \bigg) \| w_{t-1} \|^2 \bigg] \nonumber \\
&\leq E \bigg[ 2 \bigg( \lambda_1^2 - \lambda_1^2 \frac{(u_1^Tw_0)^2}{\|w_0\|^2} \bigg) \|w_{t-1}\|^2 \bigg] \nonumber \\
&= E \bigg[ 2\lambda_1^2 \Big( \|w_0\|^2 - (u_1^Tw_0)^2 \Big) \frac{\| w_{t-1} \|^2}{\| w_0 \|^2} \bigg] \nonumber \\
&=  2\lambda_1^2 E \bigg[ \frac{\| w_{t-1} \|^2}{\| w_0 \|^2} \sum_{k=2}^d (u_k^Tw_0)^2  \bigg] \label{eq:VR-HVB-recurrence-Pw-5}
\end{align}
where we again have used the fact that $u_1,\ldots,u_d$ form an orthonormal basis for the last equality.

On the other hand, by observing that $P^2=P$, we have
\begin{align}
E [ \| P(C_t-C)Pw_{t-1} \|^2]
= E [ w_{t-1}^TP M_P Pw_{t-1}] 
\leq E[\| M_P \|] E [ \| P w_{t-1} \|^2] 
\leq K E [ \| P w_{t-1} \|^2]. \label{eq:VR-HVB-recurrence-Pw-3}
\end{align}
Using \eqref{eq:VR-HVB-recurrence-Pw-2}, \eqref{eq:VR-HVB-recurrence-Pw-4}, \eqref{eq:VR-HVB-recurrence-Pw-5}, \eqref{eq:VR-HVB-recurrence-Pw-3} in \eqref{eq:VR-HVB-Pw-projection}, we obtain
\begin{align}
E[ \| P w_t \|^2] & \leq  ( 8(1-\eta+\eta \lambda_1)^2 + 4\eta^2 K ) E [ \| P w_{t-1} \|^2] + 2 (\beta(\eta))^2 E [ \|  Pw_{t-2} \|^2] + 16 \eta^2 \lambda_1^2 E \bigg[ \frac{\| w_{t-1} \|^2}{\| w_0 \|^2} \sum_{k=2}^d (u_k^Tw_0)^2  \bigg]. \label{eq:P-wt-square-exp}
\end{align}
Using Lemma~\ref{lemma:three-term-inequality-compact-form} for $E[ \| P w_t \|^2]$ defined by \eqref{eq:P-w0-square-exp}, \eqref{eq:P-w1-square-exp}, and \eqref{eq:P-wt-square-exp} with
\begin{align*}
\alpha = 8(1-\eta+\eta \lambda_1)^2 + 4\eta^2 K, \quad \beta = 2 (\beta(\eta))^2, \quad L_t = 16 \eta^2 \lambda_1^2 E \bigg[ \frac{\| w_{t} \|^2}{\| w_0 \|^2} \sum_{k=2}^d (u_k^Tw_0)^2  \bigg],
\end{align*}
we obtain
\begin{align}
E[ \| P w_t \|^2] &\leq 16 \eta^2 \lambda_1^2 \sum_{l=0}^{t-1} r_{t-l-1} \big( 8(1-\eta+\eta \lambda_1)^2 + 4 \eta^2 K,2(\beta(\eta))^2 \big)  E \bigg[ \frac{\| w_{l} \|^2}{\| w_0 \|^2} \sum_{k=2}^d (u_k^Tw_0)^2  \bigg].
\label{eq:Pwt-squared}
\end{align}
For $0 \leq t-l-1 \leq t-1$, we have
\begin{align}
r_{t-l-1} \big( 8(1-\eta+\eta \lambda_1)^2 + 4\eta^2 K,2(\beta(\eta))^2 \big) \leq P_{0,t-l-1} (K)
\label{eq:Pwk-c1}
\end{align}
where we use the fact that the power of $\eta$ is bounded by $1$ since $\eta \in (0,1]$.

Moreover, letting $\bar{c}$ be defined as
\begin{align*}
\bar{c} = \text{max}_{i=1,\ldots,n} \frac{\text{max}_{|S_t|=i} \|C_l-C\|^2}{\|E[(C_l-C)^2]\|},
\end{align*}
we have
\begin{align*}
\text{max} \|C_l-C\|^2 \leq \bar{c} K.
\end{align*}

Since
\begin{align*}
\bigg\| 
\begin{bmatrix}
2 \big( (1-\eta)(I+\eta C)+\eta(C_l-C)P \big) & -\beta(\eta)I \\
I & 0 \\
\end{bmatrix}
\bigg\|^2
& \leq
2
\bigg\| 
\begin{bmatrix}
2(1-\eta)(I+\eta C) & -\beta(\eta)I \\
I & 0 \\
\end{bmatrix}
\bigg\|^2
+
8\eta^2
\bigg\|
\begin{bmatrix}
(C_l-C)P & 0 \\
0 & 0 \\
\end{bmatrix}
\bigg\|^2
\end{align*}
and
\begin{align*}
\bigg\|
\begin{bmatrix}
(C_l-C)P & 0 \\
0 & 0 \\
\end{bmatrix}
\bigg\|^2
= 
\big\|
(C_l-C)P
\big\|^2
\leq
\big\|
C_l-C
\big\|^2
\leq
\bar{c}K,
\end{align*}
we have
\begin{align*}
\bigg\| 
\begin{bmatrix}
2 \big( (1-\eta)(I+\eta C)+\eta(C_l-C)P \big) & -\beta(\eta)I \\
I & 0 \\
\end{bmatrix}
\bigg\|^2
&\leq P_{0,1}(K).
\end{align*}
Taking the the squared Euclidean norm on the both sides of
\begin{align*}
\begin{bmatrix}
w_l \\ w_{l-1}
\end{bmatrix}
=
\begin{bmatrix}
2\big( (1-\eta)(I+\eta C)+\eta(C_{ld-1}-C)P \big) & -\beta(\eta)I \\
I & 0 \\
\end{bmatrix}
\ldots
\begin{bmatrix}
2\big( (1-\eta)(I+\eta C)+\eta(C_0-C)P \big) & -\beta(\eta)I \\
I & 0 \\
\end{bmatrix}
\begin{bmatrix}
w_0 \\ 0
\end{bmatrix}
,
\end{align*}
we have
\begin{align}
\| w_l \|^2 \leq \| (w_l,w_{l-1}) \|^2 \leq P_{0,l}(K) \|w_0\|^2.
\label{eq:Pwk-c2}
\end{align}
Using \eqref{eq:Pwk-c1} and \eqref{eq:Pwk-c2} for \eqref{eq:Pwt-squared}, we have
\begin{align}
E[ \| P w_t \|^2] \leq 16 \eta^2 \lambda_1^2 \sum_{l=0}^{t-1} P_{0,t-l-1}(K) P_{0,l}(K)  E \bigg[ \sum_{k=2}^d (u_k^Tw_0)^2  \bigg] \leq \eta^2 P_{0,t-1}(K) E \bigg[ \sum_{k=2}^d (u_k^Tw_0)^2  \bigg].
\label{eq:Pwt-squared-final}
\end{align}
Plugging \eqref{eq:Pwt-squared-final} into \eqref{eq:trace-bound}, we finally obtain
\begin{align*}
E[w_{t}^TPM_kPw_{t}] \leq \eta^2 P_{1,t}(K) \sum_{k=2}^d E \big[ (u_k^Tw_0)^2 \big].
\end{align*}
\end{proof}

\begin{proof}[Proof of Lemma~\ref{lemma:single-epoch-convergence}]
From Lemma~\ref{lemma:uk-wt-square-exp-equality}, we have
\begin{align}
E[(u_1^T w_m)^2] = p_{m}(\alpha_1(\eta),\beta(\eta)) E[(u_1^Tw_0)^2] + 4\eta^2 \sum_{t=1}^{m-1} q_{m-t-1}(\alpha_1(\eta),\beta(\eta)) E[w_{t}^TPM_kPw_{t}]. \label{eq:u1-wm}
\end{align}
Using
\begin{align*}
E[w_{t}^TPM_kPw_{t}] = E[w_{t}^TP(C_t-C)u_ku_k^T(C_t-C)Pw_{t}] = E[(w_{t}^TP(C_t-C)u_k)^2] \geq 0
\end{align*}
and \eqref{eq:coefficient-positivity-general-alpha-q} in Lemma~\ref{lemma:coefficient-inequality} since $\alpha_1(\eta) > 4 \beta(\eta)$, we  have
\begin{align}
E[(u_1^T w_m)^2] \geq p_{m}(\alpha_1(\eta),\beta(\eta)) E[(u_1^Tw_0)^2].
\label{eq:u1-wm-bound}
\end{align}

On other hand, for $2 \leq k \leq d$, using Lemma~\ref{lemma:uk-wt-square-exp-equality} and \eqref{eq:coefficient-inequality} in Lemma~\ref{lemma:coefficient-inequality} since $\alpha_k(\eta) \leq \alpha_2(\eta) = 4\beta(\eta)$, we have
\begin{align*}
E[(u_k^T w_m)^2] &= p_{m}(\alpha_k(\eta),\beta(\eta)) E[(u_k^Tw_0)^2] + 4\eta^2 \sum_{t=1}^{m-1} q_{m-t-1}(\alpha_k(\eta),\beta(\eta)) E[w_{t}^TPM_kPw_{t}] \\
&\leq p_{m}(\alpha_2(\eta),\beta(\eta)) E[(u_k^Tw_0)^2] + 4 \eta^4 \sum_{t=1}^{m-1} q_{m-t-1}(\alpha_2(\eta),\beta(\eta)) E[w_{t}^TPM_kPw_{t}].
\end{align*}
Moreover, using Lemma~\ref{lemma:P-wt-square-inequality}, we further have
\begin{align}
E[(u_k^T w_m)^2] \leq p_{m}(\alpha_2(\eta),\beta(\eta)) E[(u_k^Tw_0)^2] + 4 \eta^4 \sum_{t=1}^{m-1} q_{m-t-1}(\alpha_2(\eta),\beta(\eta)) P_{1,t}(K) \sum_{k=2}^d E[ (u_k^Tw_0)^2].
\label{eq:uk-wm-bound}
\end{align}
Combining \eqref{eq:u1-wm-bound} and \eqref{eq:uk-wm-bound}, we obtain
\begin{align}
\frac{\sum_{k=2}^d E[ (u_k^Tw_m)^2]}{E[(u_1^Tw_m)^2]} 
&\leq 
\bigg( \frac{p_{m}(\alpha_2(\eta),\beta(\eta))}{p_{m}(\alpha_1(\eta),\beta(\eta))} + \frac{4 \eta^4 (d-1) \sum_{t=1}^{m-1} q_{m-t-1}(\alpha_2(\eta),\beta(\eta)) P_{1,t}(K)}{p_{m}(\alpha_1(\eta),\beta(\eta))} \bigg) \frac{\sum_{k=2}^d E[ (u_k^Tw_0)^2]}{E[(u_1^Tw_0)^2]}. \nonumber
\end{align}
Using \eqref{eq:coefficient-positivity-special-alpha} and \eqref{eq:coefficient-positivity-general-alpha} in Lemma~\ref{lemma:coefficient-inequality} since $\alpha_1(\eta) > 4\beta(\eta)$, we have
\begin{align*}
q_{m-t-1}(\alpha_2(\eta),\beta(\eta)) = (m-t)^2 (\beta(\eta))^{m-t-1}, \quad (\beta(\eta))^{m} = p_m(\alpha_2(\eta),\beta(\eta)) < p_m(\alpha_1(\eta),\beta(\eta)).
\end{align*}
Therefore, we obtain
\begin{align}
\frac{\sum_{k=2}^d E[ (u_k^Tw_m)^2]}{E[(u_1^Tw_m)^2]} 
\leq \bigg( \frac{p_{m}(\alpha_2(\eta),\beta(\eta))}{p_{m}(\alpha_1(\eta),\beta(\eta))} + \eta^4 P_{1,m}(K) \bigg) \frac{\sum_{k=2}^d E[ (u_k^Tw_0)^2]}{E[(u_1^Tw_0)^2]}
\label{eq:expectation-bound-raw-form}
\end{align}
where we used the fact that $\beta(n) > \epsilon$ for a fixed small enough $\epsilon$ and any $\eta \geq 0$.
\color{black}

Next, let
\begin{align*}
\rho(\eta,K) = g(\eta) + c^{\prime} \eta^4 K, \quad g(\eta) &= \frac{p_{m}(\alpha_2(\eta),\beta(\eta))}{p_{m}(\alpha_1(\eta),\beta(\eta))}.
\end{align*}
Using \eqref{eq:coefficient-positivity-special-alpha} and \eqref{eq:coefficient-positivity-general-alpha} in Lemma~\ref{lemma:coefficient-inequality}, we have
\begin{align}
g(\eta) &=  
\frac{4^{m+1} (\beta(\eta))^m}
{\big[ \big( \sqrt{\alpha_1(\eta)} + \sqrt{\alpha_1(\eta) - 4 \beta(\eta)} \big) ^{m} 
+ \big( \sqrt{\alpha_1(\eta)} - \sqrt{\alpha_1(\eta) - 4 \beta(\eta)} \big) ^{m}  \big]^2} \nonumber \\
&= 
\Bigg[
\frac{2^{m+1} (\sqrt{\beta(\eta)})^m}
{\big( \sqrt{\alpha_1(\eta)} + \sqrt{\alpha_1(\eta) - 4 \beta(\eta)} \big) ^{m} 
+ \big( \sqrt{\alpha_1(\eta)} - \sqrt{\alpha_1(\eta) - 4 \beta(\eta)} \big) ^{m}}
\Bigg]^2 \label{eq:g-eta} \\
&= 
\Bigg[
\frac{2^{m+1}}
{\big( \sqrt{\gamma(\eta)} + \sqrt{\gamma(\eta) - 4} \big) ^{m} 
+ \big( \sqrt{\gamma(\eta)} - \sqrt{\gamma(\eta) - 4} \big) ^{m}}
\Bigg]^2
\label{eq:g-eta-2}
\end{align}
where
\begin{align*}
\gamma(\eta) = \frac{\alpha_1(\eta)}{\beta(\eta)} = \frac{4(1-\eta+\eta \lambda_1)^2}{(1-\eta+\eta\lambda_2)^2}.
\end{align*}
By Lemma~\ref{lemma:g-eta}, we have $g(0)=1, g^{\prime}(0) = -2m^2(\lambda_1-\lambda_2)$, and $g^{\prime}(\eta) < 0$ for any $\eta \in (0,1]$, implying that $g(\eta)$ is a decreasing function of $\eta$ on $(0,1]$. Moreover, since $g(\eta)$ is twice continuously differentiable on an open interval containing $0$, by Taylor approximation at $\eta = 0$, we have
\begin{align}
g(\eta) = 1 - 2m^2(\lambda_1 - \lambda_2) \eta + o(\eta^{\delta})
\label{eq:main-term-small-oh}
\end{align}
for any $1< \delta<2$. For all subsequent analysis, any such $\delta$ would do it and we pick $\delta = 3/2$ arbitrarily. 

Plugging \eqref{eq:main-term-small-oh} into $\rho(\eta,K)$, we have
\begin{align*}
\rho(\eta, K) = 1- 2m^2(\lambda_1-\lambda_2)\eta + o(\eta^{3/2}) + c^{\prime} \eta^4 K.
\end{align*}
Since
\begin{align*}
\frac{\partial}{\partial \eta} \rho(\eta,K) \big|_{\eta = 0} < 0, \quad \rho(0,K)=1    
\end{align*}
hold, there exist some $\bar{\eta}(K)>0$ such that for every $0<\eta \leq \bar{\eta}(K)$, we have
\begin{align*}
\rho(1,0) \leq \rho(\eta,K)<1.     
\end{align*}
The lower bound follows the fact that $g(\eta)$ is decreasing and $c^{\prime} \eta^4 K \geq 0$.
\end{proof}

\begin{proof}[Proof of Theorem~\ref{theorem:algorithm-convergence}]
By Lemma~\ref{lemma:single-epoch-convergence}, there exists some $\bar{\eta}(K)$ such that for every $\eta \in (0,\bar{\eta}(K)]$, we have
\begin{align*}
0 <\rho(1,0) \leq \rho(\eta,K)<1.     
\end{align*}
By repeatedly applying
\begin{align*}
\frac{\sum_{k=2}^d E[(u_k^T\tilde{w}_{s})^2]}{E[(u_1^T\tilde{w}_{s})^2]}
=
\frac{\sum_{k=2}^d E[(u_k^Tw_m)^2]}{E[(u_1^Tw_m)^2]} 
\leq \rho(\eta,K) \frac{\sum_{k=2}^d E[(u_k^Tw_0)^2]}{E[(u_1^Tw_0)^2]} 
= \rho(\eta,K) \frac{\sum_{k=2}^d E[(u_k^T\tilde{w}_{s-1})^2]}{E[(u_1^T\tilde{w}_{s-1})^2]},
\end{align*}
we obtain
\begin{align*}
\frac{\sum_{k=2}^d E[(u_k^T \tilde{w}_s)^2]}{E[(u_1^T\tilde{w}_s)^2]}
\leq \rho(\eta,K)^s \frac{\sum_{k=2}^d E[(u_k^T\tilde{w}_0)^2]}{E[(u_1^T\tilde{w}_0)^2]} 
&= big \rho(\eta,K)^s \bigg( \frac{1-(u_1^T \tilde{w}_0)^2}{(u_1^T \tilde{w}_0)^2} \bigg).
\end{align*}
\end{proof}

\newpage
\subsection{Technical Lemmas}
\begin{lemma} \label{lemma:trace}
Let $w$ be a vector in $\mathbb{R}^d$, and let $P$, $M$ be $d \times d$ symmetric matrices. Then, we have
\begin{align*}
w^TPMPw \leq \| M \| \| Pw \|^2.
\end{align*}
\end{lemma}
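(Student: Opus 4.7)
The plan is to reduce the bilinear-form bound to a standard Rayleigh-quotient estimate. First I would exploit the symmetry of $P$ to write $w^T P = (Pw)^T$, so that setting $v = Pw \in \mathbb{R}^d$ gives the identity
\begin{equation*}
w^T P M P w \;=\; v^T M v.
\end{equation*}
At this point the inequality to prove collapses to $v^T M v \leq \|M\|\,\|v\|^2$, which is the familiar upper bound coming from the spectral norm of a symmetric matrix.

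Next I would establish that inequality. Two equally short routes are available and I would pick whichever reads more cleanly in context. The first is Cauchy--Schwarz followed by the operator-norm definition: $v^T M v \leq |v^T M v| \leq \|v\|\,\|Mv\| \leq \|v\|\,\|M\|\,\|v\| = \|M\|\,\|v\|^2$, which does not even use symmetry of $M$. The second route uses the spectral theorem for the symmetric matrix $M$: diagonalize $M = Q\Lambda Q^T$ with $Q$ orthogonal and eigenvalues $\mu_1,\ldots,\mu_d$, set $y = Q^T v$, and observe
\begin{equation*}
v^T M v \;=\; \sum_{i=1}^d \mu_i y_i^2 \;\leq\; \max_i |\mu_i| \sum_{i=1}^d y_i^2 \;=\; \|M\|\,\|y\|^2 \;=\; \|M\|\,\|v\|^2,
\end{equation*}
where the last equality uses that $Q$ preserves Euclidean norm.

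Finally I would substitute back $v = Pw$ so that $\|v\|^2 = \|Pw\|^2$, yielding the stated bound $w^T P M P w \leq \|M\|\,\|Pw\|^2$. There is no real obstacle: the only subtlety worth flagging is that symmetry of $P$ is what lets me pull $P$ across the transpose into the vector $v$, and that the spectral-norm inequality for $v^T M v$ holds regardless of the sign of $v^T M v$ because the right-hand side $\|M\|\,\|v\|^2$ is nonnegative. The argument is a few lines and requires no additional machinery beyond the definition of the spectral norm.
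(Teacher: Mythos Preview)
Your proof is correct. The approach, however, differs from the paper's: the paper routes the argument through traces, writing $w^TPMPw = \mathrm{Tr}[MPww^TP]$ by the cyclic property, then invoking the inequality $\mathrm{Tr}[MB] \leq \|M\|\,\mathrm{Tr}[B]$ for the positive semidefinite rank-one matrix $B = Pww^TP$, and finally collapsing $\mathrm{Tr}[Pww^TP]$ back to $\|Pw\|^2$. Your route is more direct: you absorb $P$ into the vector via symmetry and reduce immediately to the Rayleigh-quotient bound $v^TMv \leq \|M\|\,\|v\|^2$. Your argument is shorter and avoids the auxiliary trace inequality (which itself requires a small justification via eigendecomposition of $B$), while the paper's trace framing generalizes more readily to situations where one wants to bound $\mathrm{Tr}[MX]$ for higher-rank PSD $X$. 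For this lemma as stated, your version is the cleaner choice.
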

\begin{proof}
By the cyclic property of the trace, we have
\begin{align*}
w^TPMPw = \text{Tr}[w^TPMPw] = \text{Tr}[MPww^TP].
\end{align*}
Since $Pww^TP$ is positive semi-definite, we have
\begin{align*}
\text{Tr}[MPww^TP] \leq \| M \| \text{Tr}[Pww^TP].
\end{align*}
Again, by the cyclic property of the trace, we finally have
\begin{align*}
w^TPMPw \leq \| M \| \text{Tr}[Pww^TP] = \| M \| \text{Tr}[w^TPPw] = \| M \| \| Pw \|^2.
\end{align*}
\end{proof}

\begin{lemma} \label{lemma:linear-algebra}
Let $w$ be a vector in $\mathbb{R}^d$ with $\|w\|=1$ and let $C$ be a $d \times d$ symmetric matrix. Then, for $P=I-ww^T$, we have
\begin{align*} 
\| PC - CP \|^2 = w^TC^2w - (w^TCw)^2.
\end{align*}
\end{lemma}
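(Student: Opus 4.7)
The plan is to reduce the identity to a two-dimensional linear-algebra computation. First, expand using $P = I - ww^T$ to get $PC - CP = (I-ww^T)C - C(I-ww^T) = Cww^T - ww^TC$. Setting $u := Cw$, the matrix $A := PC - CP = uw^T - wu^T$ is skew-symmetric and of rank at most $2$; both its column and row spaces lie in $V := \mathrm{span}\{w, u\}$.

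Next, I would build an orthonormal basis of $V$ by taking $w$ (which is already a unit vector by assumption) and $e_2 := v/\beta$, where $v := u - (w^Tu)w$ and $\beta := \|v\| = \sqrt{\|u\|^2 - (w^Tu)^2}$. Since $u = Cw$ gives $w^Tu = w^TCw$ and $\|u\|^2 = w^TC^2w$, we have $\beta^2 = w^TC^2w - (w^TCw)^2$, which is exactly the right-hand side of the claimed identity. A direct computation using $\|w\|=1$ yields $Aw = u - (w^Tu)w = \beta e_2$ and $Ae_2 = -\beta w$, while $A$ annihilates $V^\perp$ because both $uw^T$ and $wu^T$ vanish on $V^\perp$. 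Therefore, in the basis $\{w, e_2\}$ together with any orthonormal basis of $V^\perp$, $A$ is block-diagonal with a single nontrivial $2 \times 2$ skew-symmetric block whose off-diagonal entries are $\pm\beta$.

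The two singular values of that $2\times 2$ block equal $\beta$, and all other singular values of $A$ vanish, so $\|A\|^2 = \beta^2 = w^TC^2w - (w^TCw)^2$, which is the desired identity.

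There is no substantial obstacle here; the only small risk is an algebra slip in verifying $Ae_2 = -\beta w$, which follows by expanding $u = (w^Tu)w + \beta e_2$ inside $uw^T - wu^T$ and using $\|w\|=1$. A slightly shorter alternative I would keep as a backup is to compute the Frobenius norm directly, $\|A\|_F^2 = \mathrm{tr}(A^TA) = 2(\|u\|^2 - (w^Tu)^2)$, and then invoke the fact that a real skew-symmetric matrix of rank at most $2$ has its two nonzero singular values equal, so $2\|A\|^2 = \|A\|_F^2$ and the identity follows with no basis construction.
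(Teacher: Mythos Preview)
Your proof is correct. Both you and the paper start from the same expansion $PC-CP = Cww^T - ww^TC$, but from there the routes diverge. The paper computes $U^TU$ explicitly, observes that its range lies in $\mathrm{span}\{w,Cw\}$, and then checks that every vector in that span is an eigenvector with eigenvalue $w^TC^2w-(w^TCw)^2$; nonnegativity of this quantity is verified separately via $\|PCw\|^2$. You instead exploit the skew-symmetric rank-$\le 2$ structure of $A=uw^T-wu^T$ directly: by passing to an orthonormal basis $\{w,e_2\}$ of $\mathrm{span}\{w,u\}$ you represent $A$ as a single $2\times 2$ skew block with off-diagonals $\pm\beta$, from which $\|A\|=\beta$ is immediate. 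Your backup Frobenius argument is also valid and arguably the quickest of the three. The one small omission is the degenerate case $\beta=0$ (i.e.\ $Cw$ parallel to $w$), where $e_2$ is undefined; but then $A=0$ and the identity is trivial, so this is harmless. Overall your approach is a bit more geometric and avoids expanding the four-term product $U^TU$; the paper's approach is more direct-computational. Both are elementary and of comparable length.
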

\begin{proof}
Let $U=PC-CP$. Since the non-zero singular values of $U$ are the square roots of the non-zero eigenvalues of $U^TU$, we focus on $U^TU$. By definition of $P$, we have
\begin{align*}
U = (I - ww^T)C - C(I-ww^T) = Cww^T - ww^TC,
\end{align*}
resulting in
\begin{align*}
U^TU &= (Cww^T - ww^TC)^T(Cww^T - ww^TC) \\
&= ww^TC^2ww^T - ww^TCww^TC - Cww^TCww^T + Cww^Tww^TC \\
&= (w^TC^2w)ww^T - (w^TCw)ww^TC - (w^TCw)Cww^T + Cww^TC.
\end{align*}
For any vector $u$ in $\mathbb{R}^d$, we have
\begin{align}
U^TUu &= (w^TC^2w)ww^Tu - (w^TCw)ww^TCu - (w^TCw)Cww^Tu + Cww^TCu \nonumber \\
&= \big[ (w^TC^2w)(w^Tu) - (w^TCw)(w^TCu) \big]w + \big[w^TCu - (w^TCw)(w^Tu) \big] Cw \label{eq:Ut-U-u}
\end{align}
meaning that that $U^TUu$ lies in the span of $w$ and $Cw$. This implies that any eigenvector $u$ for $U^TU$ corresponding to a non-zero eigenvalue is of the form 
\begin{align}
u = c_1w + c_2 Cw.
\label{eq:u}
\end{align}
By plugging \eqref{eq:u} into \eqref{eq:Ut-U-u}, we have
\begin{align*}
U^TUu &= c_1[ (w^TC^2w) - (w^TCw)^2 ]w + c_2[w^TC^2w - (w^TCw)^2] Cw \\
&= [ (w^TC^2w) - (w^TCw)^2 ] (c_1w + c_2Cw) \\
&= [ (w^TC^2w) - (w^TCw)^2 ] u.
\end{align*}
We conclude that all eigenvalues of $U^TU$ are $(w^TC^2w) - (w^TCw)^2$ and possibly $0$. Therefore, the spectral radius of $U^TU$ is $|(w^TC^2w) - (w^TCw)^2|$. Since it is easy to check that $P^2=P$ and the expansion of $\|PCw\|^2$ results in
\begin{align*}
\| PCw \|^2 = w^TCP^2Cw = w^TCPCw = (w^TC^2w) - (w^TCw)^2 \geq 0,
\end{align*}
we have
\begin{align*}
\| U \|^2 = \| PC - CP \|^2 = w^TC^2w - (w^TCw)^2.
\end{align*}
\end{proof}

\begin{lemma} \label{lemma:quadratic-form}
Let $C$ be a positive semi-definite $d \times d$ matrix and $(\lambda_1,u_1)$ be the largest eigenpair of $C$. Then, for any unit vector $w$ in $\mathbb{R}^d$, we have
\begin{align*} 
w^TC^2w - (w^TCw)^2 \leq 2 \lambda_1^2(1-(u_1^Tw)^2).
\end{align*}
\end{lemma}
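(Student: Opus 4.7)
The plan is to work in the eigenbasis of $C$. Since $C$ is symmetric and positive semi-definite, write $w=\sum_{i=1}^d c_i u_i$ in the orthonormal eigenbasis $\{u_i\}$ of $C$, where $\sum_i c_i^2 = 1$ because $\|w\|=1$. This turns the inequality into a concrete statement about the coefficients $(c_i)$ and the eigenvalues $(\lambda_i)$, namely
\begin{align*}
\sum_{i=1}^d c_i^2 \lambda_i^2 - \Bigl(\sum_{i=1}^d c_i^2 \lambda_i\Bigr)^2 \leq 2\lambda_1^2(1-c_1^2),
\end{align*}
since $u_1^T w = c_1$.

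From here I would bound the two terms separately. For the first term, I use $0 \leq \lambda_i \leq \lambda_1$ to get $w^T C^2 w = \sum_i c_i^2 \lambda_i^2 \leq \lambda_1^2 \sum_i c_i^2 = \lambda_1^2$. For the second term, positive semi-definiteness gives a lower bound: since every $\lambda_i \geq 0$, the dropped terms are non-negative and $w^T C w = \sum_i c_i^2 \lambda_i \geq c_1^2 \lambda_1$, hence $(w^T C w)^2 \geq c_1^4 \lambda_1^2$.

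Combining these two estimates gives
\begin{align*}
w^T C^2 w - (w^T C w)^2 \leq \lambda_1^2 - c_1^4 \lambda_1^2 = \lambda_1^2(1-c_1^2)(1+c_1^2) \leq 2\lambda_1^2(1-c_1^2),
\end{align*}
where the last step uses $1+c_1^2 \leq 2$. Substituting $c_1^2 = (u_1^T w)^2$ yields the claim.

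The proof is short and the only subtle point is recognizing the role of positive semi-definiteness: the assumption $\lambda_i \geq 0$ is precisely what is needed to produce the lower bound $w^T C w \geq c_1^2 \lambda_1$ by discarding the non-negative tail terms. Without the PSD hypothesis, the cross terms in $\sum_{i \geq 2} c_i^2 \lambda_i$ could be negative and cancel the leading contribution, so that step is the main (if mild) obstacle. Everything else is a direct application of the spectral decomposition and the elementary factorization $1-x^2=(1-x)(1+x)$.
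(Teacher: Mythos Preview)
Your proof is correct and follows essentially the same approach as the paper: both establish the bounds $w^TC^2w \leq \lambda_1^2$ and $w^TCw \geq \lambda_1 (u_1^Tw)^2$ (the latter relying on positive semi-definiteness), then combine them and factor $1-(u_1^Tw)^4 = (1-(u_1^Tw)^2)(1+(u_1^Tw)^2) \leq 2(1-(u_1^Tw)^2)$. The only cosmetic difference is that you work in the full eigenbasis, whereas the paper splits $w$ only into its $u_1$-component and the orthogonal remainder.
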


\begin{proof}
Letting
\begin{align*}
w = (u_1^Tw) u_1 + (I - u_1u_1^T) w,
\end{align*}
we have after some manipulations
\begin{align}
w^TCw &= \lambda_1(u_1^T w)^2 + w^T(I-u_1u_1^T)C(I-u_1u_1^T)w
\label{eq:wt-C-w}
\end{align}
and
\begin{align}
w^TC^2w &= \lambda_1^2(u_1^T w)^2 + w^T(I-u_1u_1^T)C^2(I-u_1u_1^T)w.
\label{eq:wt-C-C-w}
\end{align}
Since the second terms in \eqref{eq:wt-C-w} and \eqref{eq:wt-C-C-w} are non-negative due to $C$ being positive semi-definite, we have
\begin{align*}
\lambda_1(u_1^Tw)^2 \leq w^TCw \leq \lambda_1, \quad \lambda_1^2(u_1^Tw)^2 \leq w^TC^2w \leq \lambda_1^2.
\end{align*}
Therefore,
\begin{align*}
w^TC^2w - (w^TCw)^2 \leq \lambda_1^2(1-(u_1^Tw)^4) = 2 \lambda_1^2(1+(u_1^Tw)^2)(1-(u_1^Tw)^2) \leq 2 \lambda_1^2( 1-(u_1^Tw)^2)
\end{align*}
where the last inequality follows from $(u_1^Tw)^2 \leq \|u_1\|^2 \|w\|^2 = 1$.
\end{proof}

\begin{lemma} \label{lemma:coefficient-inequality}
Let $w_t$ be a sequence of real numbers such that
\begin{align*}
w_t = (\alpha-\beta) w_{t-1} - \beta (\alpha-\beta) w_{t-2} + \beta^3 w_{t-3} + L_{t-1} + \beta L_{t-2}
\end{align*}
for $t \geq 3$ and $w_0 = L_0, w_1 = \frac{\alpha}{4} L_0, w_2 = \big( \frac{\alpha}{2}-\beta \big)^2L_0 + L_{1}$. Then, we have
\begin{align}
w_t = p_{t}(\alpha,\beta) L_0 + \sum_{r=1}^{t-1} q_{t-r-1}(\alpha,\beta) L_{r} \label{eq:coefficient-expression}
\end{align}
where $p_t(\alpha,\beta)$ and $q_t(\alpha,\beta)$ are recurrence polynomials defined as
\begin{align}
p_t(\alpha,\beta) &= (\alpha - \beta) p_{t-1}(\alpha,\beta) - \beta (\alpha - \beta) p_{t-2}(\alpha,\beta) + \beta^3 p_{t-3}(\alpha,\beta) \label{eq:coefficient-relation-p} \\
q_t(\alpha,\beta) &= (\alpha - \beta) q_{t-1}(\alpha,\beta) - \beta (\alpha - \beta) q_{t-2}(\alpha,\beta) + \beta^3 q_{t-3}(\alpha,\beta) \label{eq:coefficient-relation-q}
\end{align}
for $t \geq 3$ with
\begin{align}
p_0(\alpha,\beta) &= 1, \quad p_1(\alpha,\beta) = \frac{\alpha}{4}, \quad p_2(\alpha,\beta) = \Big( \frac{\alpha}{2} -\beta \Big)^2,
\label{eq:coefficient-initial-condition} \\
q_0(\alpha,\beta) &= 1, \quad q_1(\alpha,\beta) = {\alpha}, \quad q_2(\alpha,\beta) = (\alpha -\beta )^2.
\label{eq:coefficient-initial-condition-2}
\end{align}
Moreover, for $t \geq 0$, we have
\begin{itemize}
\item if $0 \leq \alpha = 4\beta$,
\begin{align}
p_t(4\beta,\beta) = \beta^t \geq 0, \quad q_t(4\beta,\beta) &= (t+1)^2 \beta^t \geq 0. \label{eq:coefficient-positivity-special-alpha}
\end{align}
\item if $0 \leq 4 \beta < \alpha$, \begin{align}
p_t(\alpha,\beta) &= \bigg[ \frac{1}{2} \bigg(\frac{\sqrt{\alpha}}{2} + \frac{\sqrt{\alpha - 4\beta}}{2} \bigg)^t + \frac{1}{2} \bigg(\frac{\sqrt{\alpha}}{2} - \frac{\sqrt{\alpha - 4\beta}}{2} \bigg)^t \bigg]^2 > p_t(4\beta,\beta) \geq 0, \label{eq:coefficient-positivity-general-alpha} \\
q_t(\alpha,\beta) &= \frac{1}{\alpha - 4 \beta} \bigg[ \bigg(\frac{\sqrt{\alpha}}{2} + \frac{\sqrt{\alpha - 4\beta}}{2} \bigg)^{t+1}  - \bigg(\frac{\sqrt{\alpha}}{2} - \frac{\sqrt{\alpha - 4\beta}}{2} \bigg)^{t+1} \bigg]^2 \geq 0. \label{eq:coefficient-positivity-general-alpha-q}
\end{align}
\item if $0 \leq \alpha < 4 \beta$, \begin{align}
p_t(\alpha,\beta) \leq p_t(4\beta,\beta), \quad q_t(\alpha,\beta) \leq q_t(4\beta,\beta). \label{eq:coefficient-inequality}
\end{align}
\end{itemize}
\end{lemma}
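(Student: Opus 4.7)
The plan is to handle the three pieces of the lemma in sequence. For the expression \eqref{eq:coefficient-expression}, I would proceed by induction on $t$: writing $w_t = P_t L_0 + \sum_{r=1}^{t-1} Q_t^{(r)} L_r$ and matching coefficients of each $L_r$ in the three-term recurrence for $w_t$, one sees that $P_t$ obeys the same recursion as $p_t$ with the same initial values ($P_0=1, P_1=\alpha/4, P_2=(\alpha/2-\beta)^2$), while $Q_t^{(r)}$ obeys the recursion for $q_{t-r-1}$ once $t\ge r+3$. Three boundary values must be verified by hand: the coefficient of $L_{t-1}$ in $w_t$ equals $1 = q_0$ (from the $L_{t-1}$ forcing), the coefficient of $L_{t-2}$ equals $(\alpha-\beta)\cdot 1 + \beta = \alpha = q_1$ (combining the propagated $Q_{t-1}^{(t-2)}=1$ with the $\beta L_{t-2}$ forcing), and the coefficient of $L_{t-3}$ collapses to $q_2=(\alpha-\beta)^2$; the induction then closes cleanly.

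The closed forms come from the common characteristic polynomial $x^3 - (\alpha-\beta)x^2 + \beta(\alpha-\beta)x - \beta^3$, which factors as $(x-\beta)\bigl(x^2 - (\alpha-2\beta)x + \beta^2\bigr)$ since $x=\beta$ is a root by direct substitution. The remaining roots $x_{1,2}$ satisfy $x_1+x_2 = \alpha-2\beta$ and $x_1 x_2 = \beta^2$; an elementary square-completion shows that for $\alpha\ge 4\beta$ they equal $\bigl(\tfrac{\sqrt{\alpha}}{2}\pm\tfrac{\sqrt{\alpha-4\beta}}{2}\bigr)^2$. Writing the general solution as $A x_1^t + B x_2^t + C\beta^t$ and matching the three initial conditions \eqref{eq:coefficient-initial-condition} and \eqref{eq:coefficient-initial-condition-2} yields the announced formulas \eqref{eq:coefficient-positivity-general-alpha} and \eqref{eq:coefficient-positivity-general-alpha-q}. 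In the degenerate case $\alpha=4\beta$ the factorization becomes $(x-\beta)^3$, the general solution is $(A+Bt+Ct^2)\beta^t$, and solving the initial conditions forces $A=1,B=C=0$ for $p_t$ and $A=1,B=2,C=1$ for $q_t$, giving \eqref{eq:coefficient-positivity-special-alpha}. The strict inequality $p_t(\alpha,\beta)>p_t(4\beta,\beta)$ when $\alpha>4\beta$ then follows immediately from AM--GM applied to $x_1^t,x_2^t$ using $\sqrt{x_1 x_2}=\beta$, which is strict because $x_1\ne x_2$.

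The inequality case $\alpha<4\beta$ is where the real work lies. Here the quadratic roots become complex conjugates $x_{1,2}=\beta e^{\pm i\theta}$ of modulus $\beta$. Substituting this into the closed forms (which still satisfy the same three-term recursion and initial conditions and therefore agree with $p_t,q_t$ throughout this regime) gives $p_t(\alpha,\beta)=\tfrac{\beta^t}{2}\bigl(1+\cos t\theta\bigr)$ and $q_t(\alpha,\beta)=\beta^t\bigl(\sin((t+1)\theta/2)/\sin(\theta/2)\bigr)^2$. The bound $p_t\le\beta^t=p_t(4\beta,\beta)$ is then trivial from $1+\cos(\cdot)\le 2$, and the desired $q_t\le(t+1)^2\beta^t=q_t(4\beta,\beta)$ reduces to the classical inequality $|\sin(n\phi)|\le n|\sin\phi|$ for positive integers $n$, which is a quick induction using the sine addition formula. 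I expect the main subtlety to be the analytic continuation of the closed form into the complex-root regime: one must justify that the same formula (now interpreted through the Dirichlet-kernel representation) still solves the recurrence and matches the three initial conditions before reading off the bound. Everything else in the argument is routine algebraic bookkeeping.
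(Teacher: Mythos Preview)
Your proposal is correct and, for the most part, follows the paper's own route: the induction for \eqref{eq:coefficient-expression}, the factorisation of the characteristic cubic as $(x-\beta)(x^2-(\alpha-2\beta)x+\beta^2)$, and the matching of initial conditions to obtain the closed forms \eqref{eq:coefficient-positivity-special-alpha}--\eqref{eq:coefficient-positivity-general-alpha-q} are exactly what the paper does. Your AM--GM argument for the strict inequality $p_t(\alpha,\beta)>\beta^t$ is a slight streamlining of the paper's Binomial-theorem bound, but amounts to the same observation that the two positive roots have geometric mean $\beta$.

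The one place where you genuinely diverge is the $q_t$ bound in the regime $0\le\alpha<4\beta$. The paper writes $q_t$ in the trigonometric form $\bigl[\tfrac{2\beta}{4\beta-\alpha}+\tfrac{2\beta}{4\beta-\alpha}\cos(\phi_q+t\theta_q)\bigr]\beta^t$, introduces $Q(t)=(q_t(4\beta,\beta)-q_t(\alpha,\beta))/\beta^t$, computes the second difference $Q(t{+}2)-2Q(t)+Q(t{-}2)=8+\tfrac{2\alpha}{\beta}\cos(\phi_q+t\theta_q)\ge 0$, and then chains first differences from hand-checked base cases to conclude $Q(t)\ge0$. Your route---rewriting $q_t=\beta^t\bigl(\sin((t{+}1)\theta/2)/\sin(\theta/2)\bigr)^2$ and invoking $|\sin(n\phi)|\le n|\sin\phi|$---is shorter and more transparent; it bypasses the discrete-convexity bookkeeping entirely. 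The ``analytic continuation'' you flag is not a real obstacle: since the recurrence has real coefficients and your Dirichlet-kernel expression is manifestly real, it suffices to verify the three initial values $q_0,q_1,q_2$ directly (all routine), after which the formula is forced. Either argument works; yours is the cleaner one.
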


\begin{proof}
It is easy to check that $w_0$, $w_1$, and $w_2$ satisfy \eqref{eq:coefficient-expression}. Suppose that \eqref{eq:coefficient-expression} holds for $t-1,t-2,t-3$. Then, we have
\begin{align*}
w_t &= (\alpha-\beta) w_{t-1} - \beta (\alpha-\beta) w_{t-2} + \beta^3 w_{t-3} + L_{t-1} + \beta L_{t-2} \\
&= p_{t}(\alpha,\beta) L_0 + L_{t-1} + \alpha L_{t-2} + (\alpha-\beta)^2 L_{t-3} + \sum_{r=1}^{t-4} q_{t-r-1}(\alpha,\beta) L_r \\
&= p_{t}(\alpha,\beta) L_0  + \sum_{r=1}^{t-1} q_{t-r-1}(\alpha,\beta) L_{r}.
\end{align*}
Therefore, \eqref{eq:coefficient-expression} holds by induction.

Next, we prove \eqref{eq:coefficient-positivity-special-alpha}, \eqref{eq:coefficient-positivity-general-alpha}, \eqref{eq:coefficient-positivity-general-alpha-q} and \eqref{eq:coefficient-inequality}. The characteristic equation of \eqref{eq:coefficient-relation-p} is
\begin{align}
r^3 - (\alpha-\beta) r^2 + \beta (\alpha-\beta) r - \beta^3 = 0. \label{eq:characteristic-equation}
\end{align}

If $0 \leq \alpha = 4\beta$, \eqref{eq:characteristic-equation} has a cube root of $r=\beta$.
From initial conditions \eqref{eq:coefficient-initial-condition} and \eqref{eq:coefficient-initial-condition-2}, we obtain
\begin{align}
p_t(4\beta,\beta) = \beta^t \geq 0, \quad q_t(4\beta,\beta) = (t+1)^2 \beta^t \geq 0.
\label{eq:polynomial-closed-form-special-alpha}
\end{align}

If $0 \leq 4\beta < \alpha$, the roots of \eqref{eq:characteristic-equation} are 
\begin{align*}
r = \beta, \frac{\alpha-2\beta}{2} + \frac{\sqrt{\alpha^2-4\alpha\beta}}{2}, \frac{\alpha-2\beta}{2} - \frac{\sqrt{\alpha^2-4\alpha\beta}}{2}.
\end{align*}
With initial conditions \eqref{eq:coefficient-initial-condition}, we obtain
\begin{align*}
p_t(\alpha,\beta) &= \frac{1}{4} \bigg( \frac{\alpha-2\beta}{2} + \frac{\sqrt{\alpha^2-4\alpha\beta}}{2} \bigg)^{t}  + \frac{1}{4} \bigg( \frac{\alpha-2\beta}{2} - \frac{\sqrt{\alpha^2-4\alpha\beta}}{2} \bigg)^{t} + \frac{1}{2} \beta^{t} \\
&= 
\bigg[ \frac{1}{2} \bigg(\frac{\sqrt{\alpha}}{2} + \frac{\sqrt{\alpha - 4\beta}}{2} \bigg)^t + \frac{1}{2} \bigg(\frac{\sqrt{\alpha}}{2} - \frac{\sqrt{\alpha - 4\beta}}{2} \bigg)^t \bigg]^2.
\end{align*}
The second equality can be verified by expanding the square expression. 

By the Binomial Theorem and the fact that $\alpha > 4 \beta$, we have
\begin{align*}
p_t(\alpha,\beta) \geq \frac{1}{2} \bigg( \frac{\alpha - 2\beta}{2} \bigg)^{t} + \frac{1}{2} \beta^{t}  > \beta^{t} \geq 0.
\end{align*}
On the other hand, using \eqref{eq:coefficient-initial-condition-2}, we have
\begin{align*}
q_t(\alpha,\beta) & = \frac{1}{\alpha - 4 \beta} \bigg[ \bigg( \frac{\alpha-2\beta}{2} + \frac{\sqrt{\alpha^2-4\alpha\beta}}{2} \bigg)^{t+1}  + \bigg( \frac{\alpha-2\beta}{2} - \frac{\sqrt{\alpha^2-4\alpha\beta}}{2} \bigg)^{t+1} - 2 \beta^{t+1} \bigg] \\
& = 
\frac{1}{\alpha - 4 \beta} \bigg[ \bigg(\frac{\sqrt{\alpha}}{2} + \frac{\sqrt{\alpha - 4\beta}}{2} \bigg)^{t+1}  - \bigg(\frac{\sqrt{\alpha}}{2} - \frac{\sqrt{\alpha - 4\beta}}{2} \bigg)^{t+1} \bigg]^2 \\
& \geq 0.
\end{align*}
Again, the second equality can be established by expanding the square expression.

If $0 \leq \alpha < 4 \beta$, the roots of \eqref{eq:characteristic-equation} are 
\begin{align*}
r = \beta, \frac{\alpha-2\beta}{2} + \frac{\sqrt{4\alpha\beta-\alpha^2}}{2} i, \frac{\alpha-2\beta}{2} - \frac{\sqrt{4\alpha\beta-\alpha^2}}{2} i.
\end{align*}
Setting
\begin{align*}
\text{cos } \theta_p  = \frac{\alpha -2 \beta}{2\beta}, \quad \text{sin } \theta_p  = \frac{\sqrt{4\alpha\beta-\alpha^2}}{2\beta}
\end{align*}
it is easy to verify that
\begin{align*}
p_t(\alpha,\beta) &= \frac{1}{4} \beta^t \bigg[ \text{cos } \theta_p + i \text{ sin } \theta_p \bigg]^t + \frac{1}{4} \beta^t \bigg[ \text{cos } \theta_p - i \text{ sin } \theta_p \bigg]^t + \frac{1}{2} \beta^t \\
&= \frac{1}{4} (e^{i\theta t} + e^{-i\theta t}) \beta^t + \frac{1}{2} \beta^t \\
&= \frac{1}{4} | e^{i\theta t} + e^{-i\theta t} | \beta^t + \frac{1}{2} \beta^t \\
&\leq \frac{1}{4} (|e^{i\theta t}| + |e^{-i\theta t}|) \beta^t + \frac{1}{2} \beta^t \\
&= \beta^t.
\label{eq:polynomial-closed-form-general-alpha-trigonometric-p}
\end{align*}

Moreover, with
\begin{align*}
\text{cos } \theta_q  = \frac{\alpha -2 \beta}{2\beta}, \quad \text{sin } \theta_q  = \frac{\sqrt{4\alpha\beta-\alpha^2}}{2\beta}, \quad \text{cos } \phi_q  = 1-\frac{\alpha}{2\beta}, \quad \text{sin } \phi_q  = -\frac{\sqrt{4\alpha\beta-\alpha^2}}{2\beta},
\end{align*}
it can be seen by using elementary calculus that
\begin{equation}
q_t(\alpha,\beta) = \bigg[ \frac{2\beta}{4\beta-\alpha} + \frac{2\beta}{4\beta-\alpha} \text{cos} (\phi_q + t \theta_q) \bigg] \beta^t.
\label{eq:polynomial-closed-form-general-alpha-trigonometric-q1}
\end{equation}
Let
\begin{align*}
Q(t) = \frac{q_t(4\beta,\beta) - q_t(\alpha,\beta)}{\beta^t}.
\end{align*}
Then, from \eqref{eq:coefficient-relation-p} and \eqref{eq:coefficient-initial-condition}, we have
\begin{align}
Q(0) = 0, \quad Q(1) = \frac{4\beta - \alpha}{\beta}, \quad Q(2)= \frac{(4\beta-\alpha)(2\beta+\alpha)}{\beta^2}, \quad Q(3) = \frac{(\alpha^2 + 4\beta^2)(4\beta-\alpha)}{\beta^3}
\label{eq:polynomial-initial-condition-p-1}
\end{align}
resulting in
\begin{align}
Q(2)-Q(0) = \frac{(4\beta-\alpha)(2\beta+\alpha)}{\beta^2} \geq 0, \quad Q(3)-Q(1) = \frac{(\alpha^2 + 3\beta^2)(4\beta-\alpha)}{\beta^3} \geq 0. \label{eq:polynomial-initial-condition-p-2}
\end{align}
In order to show $Q(t) \geq 0$ for $t \geq 0$, we prove $Q(t+2)-Q(t) \geq 0$ for $t \geq 0$. Using \eqref{eq:polynomial-closed-form-special-alpha}, \eqref{eq:polynomial-closed-form-general-alpha-trigonometric-q1} and standard trigonometric equalities, it follows that
\begin{align*}
Q(t+2)-2Q(t)+Q(t-2) = 8 + \frac{2\alpha}{\beta} \text{cos}(\phi_q + t\theta_q).
\end{align*}
In turn, we have
\begin{align}
Q(t+2) - Q(t) &= Q(t) - Q(t-2) + 8 + \frac{2\alpha}{\beta} \text{cos} (\phi_q + t\theta_q) \nonumber \\
&\geq Q(t) - Q(t-2) + 8 - \frac{2\alpha}{\beta} \nonumber \\
&= Q(t) - Q(t-2) + \frac{2(4\beta-\alpha)}{\beta} \nonumber \\
&\geq Q(t) - Q(t-2). \label{eq:coefficient-inequality-relation-p}
\end{align}
From \eqref{eq:polynomial-initial-condition-p-1}, \eqref{eq:polynomial-initial-condition-p-2}, and \eqref{eq:coefficient-inequality-relation-p}, we obtain $Q(t) \geq 0$ for $t \geq 0$ implying
\begin{align*}
q_t(\alpha,\beta) \leq q_t(4\beta,\beta)
\end{align*}
for $t \geq 0$.
\end{proof}

\begin{lemma} \label{lemma:three-term-inequality-compact-form}
Let $w_t$ be a sequence of non-negative real numbers such that
\begin{align*}
w_t \leq \alpha w_{t-1} + \beta w_{t-2} + L_{t-1}
\end{align*}
for $t \geq 2$ with $w_0 = 0, w_1 \leq L_0$. If $\alpha, \beta \geq 0$, we have
\begin{align}
w_t \leq \sum_{l=0}^{t-1} r_{t-l-1}(\alpha,\beta) L_{l} \label{eq:three-term-inequality-compact-form}
\end{align}
where $r_t(\alpha,\beta)$ is a recurrence polynomial defined as
\begin{align}
r_t(\alpha,\beta) &= \alpha r_{t-1}(\alpha,\beta) + \beta r_{t-2}(\alpha,\beta) \label{eq:coefficient-relation-q}
\end{align}
for $t \geq 2$ with
\begin{align}
r_0(\alpha,\beta) = 1, \quad r_1(\alpha,\beta) = \alpha.
\label{eq:coefficient-initial-condition-q}
\end{align}
\end{lemma}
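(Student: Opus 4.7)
The plan is to prove the bound by strong induction on $t$, exploiting the fact that the recurrence for $r_t(\alpha,\beta)$ has exactly the same coefficients $\alpha$ and $\beta$ as the recurrence inequality for $w_t$. This is a purely algebraic bookkeeping argument; the non-negativity of $\alpha,\beta$ is what allows us to substitute the inductive upper bound inside the weighted sum without reversing the inequality.

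First I would verify the base cases. For $t=1$, the asserted bound reads $w_1\le r_0(\alpha,\beta)L_0=L_0$, which is exactly the hypothesis on $w_1$. For $t=2$, the recurrence inequality together with $w_0=0$ gives $w_2\le\alpha w_1+L_1\le\alpha L_0+L_1=r_1(\alpha,\beta)L_0+r_0(\alpha,\beta)L_1$, matching the claim.

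For the inductive step, suppose $t\ge 3$ and that the bound \eqref{eq:three-term-inequality-compact-form} has been established for all indices strictly less than $t$. Applying the recurrence inequality and the inductive hypothesis at $t-1$ and $t-2$ (where $\alpha,\beta\ge 0$ preserves the direction of the inequality) gives
\begin{align*}
w_t &\le \alpha w_{t-1}+\beta w_{t-2}+L_{t-1}\\
&\le \alpha\sum_{l=0}^{t-2}r_{t-l-2}(\alpha,\beta)L_l+\beta\sum_{l=0}^{t-3}r_{t-l-3}(\alpha,\beta)L_l+L_{t-1}.
\end{align*}
I would then collect the coefficient of each $L_l$. For $0\le l\le t-3$ the coefficient is $\alpha r_{t-l-2}(\alpha,\beta)+\beta r_{t-l-3}(\alpha,\beta)$, which equals $r_{t-l-1}(\alpha,\beta)$ by the defining recurrence \eqref{eq:coefficient-relation-q} (applied with index $t-l-1\ge 2$). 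The coefficient of $L_{t-2}$ is $\alpha r_0(\alpha,\beta)=\alpha=r_1(\alpha,\beta)$, and the coefficient of $L_{t-1}$ is $1=r_0(\alpha,\beta)$. Combining these yields $w_t\le\sum_{l=0}^{t-1}r_{t-l-1}(\alpha,\beta)L_l$, closing the induction.

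There is no real obstacle here; the only subtlety worth highlighting is that $\alpha,\beta\ge 0$ is essential so that multiplying the inductive inequalities by $\alpha$ and $\beta$ preserves their direction, and that the index shift matches precisely because the recurrence for $r_t$ is a two-term linear recurrence with coefficients $(\alpha,\beta)$ and initial data $r_0=1$, $r_1=\alpha$ that mirror the inequality's structure and the base cases.
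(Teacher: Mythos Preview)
Your proposal is correct and follows essentially the same approach as the paper's proof: induction on $t$, substituting the inductive bounds for $w_{t-1}$ and $w_{t-2}$ into the recurrence inequality and then matching coefficients via the defining recurrence for $r_t(\alpha,\beta)$. The only cosmetic difference is that the paper takes $t=0$ and $t=1$ as base cases (using $w_0=0$ directly in the inductive step for $t=2$), whereas you verify $t=1$ and $t=2$ explicitly; your version is slightly more detailed in spelling out the role of $\alpha,\beta\ge 0$ and the coefficient of each $L_l$, but the argument is the same.
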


\begin{proof}
From $w_0 = 0$ and $w_1 \leq L_0$, it is obvious that \eqref{eq:three-term-inequality-compact-form} holds for $t = 0$ and $t=1$. Suppose that \eqref{eq:three-term-inequality-compact-form} holds for $t-1$ and $t-2$. Then, we have
\begin{align*}
w_t &\leq \alpha w_{t-1} + \beta w_{t-2} + L_{t-1} \\
&\leq \alpha \sum_{l=0}^{t-2} r_{t-l-2}(\alpha,\beta) L_{l} + \beta \sum_{l=0}^{t-3} r_{t-l-3}(\alpha,\beta) L_{l} + L_{t-1} \\
&= L_{t-1} + \alpha L_{t-2} + \sum_{l=0}^{t-3} (\alpha r_{t-l-2}(\alpha,\beta) + \beta r_{t-l-3}(\alpha,\beta)) L_{l} \\
&= \sum_{l=0}^{t-1} r_{t-l-1}(\alpha,\beta) L_{l}.
\end{align*}
Therefore, by mathematical induction, \eqref{eq:three-term-inequality-compact-form} holds for every $t$.
\end{proof}

\begin{lemma}
For
\begin{align*}
g(\eta) = \bigg[
\frac{2^{m+1}}
{h(\eta)}
\bigg]^2
\end{align*}
where
\begin{align*}
h(\eta) = \big( \sqrt{\gamma(\eta)} + \sqrt{\gamma(\eta) - 4} \big) ^{m} 
+ \big( \sqrt{\gamma(\eta)} - \sqrt{\gamma(\eta) - 4} \big) ^{m}, \quad 
\gamma(\eta) = \frac{4(1-\eta+\eta\lambda_1)^2}{(1-\eta+\eta\lambda_2)^2},
\end{align*}
we have 
\begin{align*}
g(0) = 1, \quad g^{\prime}(0) = -2m^2(\lambda_1-\lambda_2),  
\end{align*}
and
\begin{align*}
g^{\prime}(\eta) < 0    
\end{align*}
for any $\eta \in (0,1]$.
\label{lemma:g-eta}
\end{lemma}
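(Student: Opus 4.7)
The plan is to reduce all three assertions to tractable one-variable problems via an algebraic rearrangement. I set $a(\eta)=1-\eta+\eta\lambda_1$ and $b(\eta)=1-\eta+\eta\lambda_2$, so that $\sqrt{\gamma(\eta)}=2a/b$ and $\sqrt{\gamma(\eta)-4}=2\sqrt{(a+b)\eta\Delta}/b$ with $\Delta=\lambda_1-\lambda_2$. With $c=\sqrt{(a+b)\eta\Delta}$, this lets me rewrite
\[
h(\eta)=\frac{2^m}{b^m}\bigl[(a+c)^m+(a-c)^m\bigr]=\frac{2^m}{b^m}F(\eta),\qquad g(\eta)=\frac{4\,b(\eta)^{2m}}{F(\eta)^2}.
\]
The crucial observation is that only even powers of $c$ survive in $F$, so although $c$ has a $\sqrt{\eta}$ singularity at the origin, $F$ is in fact a polynomial in $\eta$ of the form $F(\eta)=2\sum_{j\ge 0}\binom{m}{2j}a^{m-2j}\bigl((a+b)\eta\Delta\bigr)^j$. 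This representation bypasses all differentiability issues at $\eta=0$.

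For the first claim, evaluating at $\eta=0$ gives $a=b=1$ and $c=0$, so $F(0)=2$ and $g(0)=4/4=1$. For the derivative $g'(0)$, I would Taylor-expand $F$ to first order at $\eta=0$: only $j=0$ and $j=1$ contribute, since higher-order $j$ terms carry $\eta^j$ with $j\ge 2$. A short calculation yields $F'(0)=2m(\lambda_1-1)+2m(m-1)\Delta$. Differentiating $g=4b^{2m}/F^2$ and plugging in $b(0)=1$, $b'(0)=\lambda_2-1$ then reduces, after the $\lambda_i-1$ pieces cancel and combine into $\Delta$, to $g'(0)=-2m^2\Delta$. For strict monotonicity on $(0,1]$, I switch back to the cleaner $(p,q)$-representation: with $p=\sqrt{\gamma}+\sqrt{\gamma-4}$ and $q=\sqrt{\gamma}-\sqrt{\gamma-4}$, the identity $pq=4$ collapses $h=p^m+q^m$ to $\phi(p)=p^m+4^m p^{-m}$ on $p\ge 2$. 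By the chain rule, the sign of $g'(\eta)$ is opposite to that of $\phi'(p)\,p'(\gamma)\,\gamma'(\eta)$, and I check each factor: (i) $\phi'(p)=m p^{-m-1}(p^{2m}-4^m)>0$ for $p>2$; (ii) $p'(\gamma)=\tfrac{1}{2}\gamma^{-1/2}+\tfrac{1}{2}(\gamma-4)^{-1/2}>0$ for $\gamma>4$; (iii) $\gamma(\eta)=4(a/b)^2$ with $(a/b)'=\Delta/b^2>0$, and for $\eta>0$ we have $a/b>1$, hence $\gamma>4$ and $p>2$. All three factors are strictly positive on $(0,1]$, yielding $g'(\eta)<0$.

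The main obstacle I anticipate is Step 2, because $p(\eta)$ has a non-smooth $\sqrt{\eta}$ singularity at $\eta=0$ that makes the $(p,q)$-chain rule from Step 3 inapplicable at the boundary. The trick is precisely to pass to the polynomial-in-$\eta$ representation via $c^2=(a+b)\eta\Delta$ before differentiating: once in that form the computation is routine Taylor expansion, but the reduction is not obvious a priori, and the clean value $-2m^2\Delta$ emerges only after cancellation between the $\lambda_1-1$ contribution from $a'$ and the $\lambda_2-1$ contribution from $b'$. Everything else is essentially algebra and sign checking.
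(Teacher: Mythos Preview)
Your proof is correct and close in spirit to the paper's, though the packaging differs in a way worth noting. Both arguments rest on the same two ideas: (i) $g$ depends on $\eta$ only through $\gamma(\eta)$, and $\gamma'(\eta)>0$; (ii) only even powers of $\sqrt{\gamma-4}$ survive in $h$, so the apparent square-root singularity at $\eta=0$ is illusory. The paper executes both in a single formula: it differentiates $g$ directly via the chain rule, then applies the binomial theorem to $(\sqrt{\gamma}\pm\sqrt{\gamma-4})^{m-1}$ so that the $1/\sqrt{\gamma-4}$ factor cancels against the odd-power expansion, leaving a manifestly polynomial bracket in $\gamma$ and $\gamma-4$; that one expression is then both shown positive (yielding $g'<0$) and evaluated at $\eta=0$ (yielding $g'(0)=-2m^2\Delta$). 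You instead split the work across two tailored representations: the substitution $c^2=(a+b)\eta\Delta$ turns $g=4b^{2m}/F^2$ into a ratio of genuine polynomials in $\eta$, from which $g'(0)$ drops out by first-order Taylor expansion, while the identity $pq=4$ collapses the monotonicity claim to the one-line check that $\phi(p)=p^m+4^mp^{-m}$ is increasing on $p>2$. Your route sidesteps the somewhat heavy intermediate chain-rule expression with its singular $1/\sqrt{\gamma-4}$ terms; the paper's route has the economy of a single formula serving both the sign and the boundary value.
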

\begin{proof}
Since $\gamma(0) = 4$, it is obvious that $g(0)=1$ holds. Next, by differentiating $\gamma(\eta)$, we have
\begin{align*}
\gamma^{\prime}(\eta) = \frac{8(1-\eta+\eta\lambda_1)(\lambda_1-\lambda_2)}{(1-\eta+\eta\lambda_2)^3} > 0
\end{align*}
for $\eta \in (0,1]$.

Using the chain rule on \eqref{eq:g-eta-2}, we obtain
\begin{align*}
g^{\prime}(\eta) &= - 2 \gamma^{\prime}(\eta) \bigg[ \frac{2^{m+1}}{h(\eta)} \bigg] \cdot \Bigg \{ \bigg[ \frac{1}{2\sqrt{\gamma(\eta)}} + \frac{1}{2\sqrt{\gamma(\eta)-4}} \bigg] \bigg[ \frac{m2^{m+1} \big( \sqrt{\gamma(\eta)}+\sqrt{\gamma(\eta)-4} \big)^{m-1}}{h(\eta)} \bigg]  \\
& \qquad + \bigg[ \frac{1}{2\sqrt{\gamma(\eta)}} - \frac{1}{2\sqrt{\gamma(\eta)-4}} \bigg] \bigg[ \frac{m2^{m+1} \big( \sqrt{\gamma(\eta)}-\sqrt{\gamma(\eta)-4} \big)^{m-1}}{ h(\eta)} \bigg]  \Bigg \} \\
&= - \frac{m 2^{m+1} \gamma^{\prime}(\eta) \sqrt{g(\eta)}}{h(\eta)^2}
\Bigg [
\frac{1}{\sqrt{\gamma(\eta)}} 
\Big[ {\big( \sqrt{\gamma(\eta)} + \sqrt{\gamma(\eta) - 4} \big) ^{m-1} 
+ \big( \sqrt{\gamma(\eta)} - \sqrt{\gamma(\eta) - 4} \big) ^{m-1}} \Big] \\
& \qquad + \frac{1}{\sqrt{\gamma(\eta)-4}} 
\Big[ 
{\big( \sqrt{\gamma(\eta)} + \sqrt{\gamma(\eta) - 4} \big) ^{m-1} 
- \big( \sqrt{\gamma(\eta)} - \sqrt{\gamma(\eta) - 4} \big) ^{m-1}} \Big] \Bigg].
\end{align*}
By the Binomial theorem, we have
\begin{align*}
\big( \sqrt{\gamma(\eta)} + \sqrt{\gamma(\eta) - 4} \big) ^{m-1} &=  \sum_{k=0}^{m-1} \binom{m-1}{k} \big( \sqrt{\gamma(\eta)-4} \big)^k \big( \sqrt{\gamma(\eta)} \big)^{m-k-1}
\end{align*}
and
\begin{align*}
\big( \sqrt{\gamma(\eta)} - \sqrt{\gamma(\eta) - 4} \big) ^{m-1} &=  \sum_{k=0}^{m-1} \binom{m-1}{k} \big( -\sqrt{\gamma(\eta)-4} \big)^k \big( \sqrt{\gamma(\eta)} \big)^{m-k-1},
\end{align*}
resulting in
\begin{align}
\big( \sqrt{\gamma(\eta)} + \sqrt{\gamma(\eta) - 4} \big) ^{m-1} 
+ \big( \sqrt{\gamma(\eta)} - \sqrt{\gamma(\eta) - 4} \big) ^{m-1}
= 2 \sum_{k=0}^{\left \lfloor{(m-1)/2}\right \rfloor} \binom{m-1}{2k} \big( \sqrt{\gamma(\eta)-4} \big)^{2k} \big( \sqrt{\gamma(\eta)} \big)^{m-2k-1}
\label{eq:appendix-binomial-theorem}
\end{align}
and
\begin{align*}
\big( \sqrt{\gamma(\eta)} + \sqrt{\gamma(\eta) - 4} \big) ^{m-1} 
- \big( \sqrt{\gamma(\eta)} - \sqrt{\gamma(\eta) - 4} \big) ^{m-1}
= 2 \sum_{k=0}^{\left \lfloor{({m-2})/{2}}\right \rfloor} \binom{m-1}{2k+1} \big( \sqrt{\gamma(\eta)-4} \big)^{2k+1} \big( \sqrt{\gamma(\eta)} \big)^{m-2k-2}.
\end{align*}
As a result, we have
\begin{align*}
g^{\prime}(\eta) &= - \frac{{m 2^{m+2} \gamma^{\prime}(\eta) \sqrt{g(\eta)}}}{h(\eta)^2} \cdot \Bigg[ {\sum_{k=0}^{\left \lfloor{(m-1)/2}\right \rfloor} \binom{m-1}{2k} \big( \sqrt{\gamma(\eta)-4} \big)^{2k} \big( \sqrt{\gamma(\eta)} \big)^{m-2k-2}} \\
& \qquad + {\sum_{k=0}^{\left \lfloor{(m-2)/2}\right \rfloor} \binom{m-1}{2k+1} \big( \sqrt{\gamma(\eta)-4} \big)^{2k} \big( \sqrt{\gamma(\eta)} \big)^{m-2k-2}} \Bigg].
\end{align*}
Since $\gamma^{\prime}(\eta) > 0$, $\gamma(\eta) > 4$ and $h(\eta)>0$ implying $\sqrt{g(\eta)}>0$ for $\eta \in (0,1]$, we have $g^{\prime}(\eta) < 0$ for any $\eta \in (0,1]$. Fact $h(\eta)>0$ can be established by using $m$ instead of $m-1$ in \eqref{eq:appendix-binomial-theorem}. Moreover, for $\eta=0$, we have
\begin{align*}
g^{\prime}(0) = -2m^2(\lambda_1 - \lambda_2).
\end{align*}
\end{proof}

\end{document}